
%
%
\documentclass[leqno,CJK]{siamltex704}
\usepackage{amssymb,amsmath,graphicx,amscd,mathrsfs}
\usepackage{color,xcolor,amsmath}
\usepackage{amsmath}
\usepackage{graphicx}
\usepackage{mathrsfs}
\usepackage{float}
\usepackage{amsfonts,amssymb}
\usepackage{dsfont}
\usepackage{pifont}
\usepackage{hyperref}
\usepackage{multirow}
\usepackage{placeins}
\numberwithin{equation}{section}
\def\3bar{{|\hspace{-.02in}|\hspace{-.02in}|}}
\def\E{{\mathcal{E}}}
\def\T{{\mathcal{T}}}

\def\b0{\boldsymbol{0}}
\def\sumT{\sum_{T\in\mathcal{T}_h}}     


\def\bf{{\mathbf{f}}}

\newtheorem{algorithm1}{Weak Galerkin Algorithm}

 \newcommand{\eps}{\varepsilon}

 \newcommand{\Real}{\mathbb{R}}

\allowdisplaybreaks 

\setlength{\parindent}{0.25in} \setlength{\parskip}{0.08in}

\begin{document}

\title{A weak Galerkin finite element method for solving the asymptotic lower bound of Maxwell eigenvalue problem}

\author{
	Shusheng Li\thanks{School of Mathematics, Jilin University, Changchun, 130012,
		China (ssli22@mails.jlu.edu.cn).}
		\and	
	Qilong Zhai\thanks{School of Mathematics, Jilin University, Changchun, 130012,
		China (zhaiql@jlu.edu.cn).}
}

\maketitle
\begin{abstract}
In this paper, we propose a weak Galerkin (WG) finite element method for the Maxwell
eigenvalue problem. By restricting subspaces, we transform the mixed form of Maxwell eigenvalue problem into simple elliptic equation. Then we give the WG numerical scheme for the Maxwell eigenvalue problem. Furthermore, we obtain the optimal error estimates of arbitrarily high convergence order and prove the lower bound property of numerical solutions for eigenvalues. Numerical experiments show the accuracy of theoretical analysis and the property of lower bound.
\end{abstract}

\begin{keywords}
weak Galerkin finite element method, Maxwell eigenvalue problem, asymptotic lower bound,
error estimate.
\end{keywords}

\begin{AMS}
	65N15, 65N25, 65N30
\end{AMS}

\section{Introduction}
The computation of eigenvalues in Maxwell equations is crucial in computational electromagnetics. Since Silvester introduced the nodal-based finite element method to electrical engineering \cite{1969Finite}, it has demonstrated several advantages for solving homogeneous waveguide problems. However, when applied to inhomogeneous waveguide problems, this method can yield spurious modes with nonzero eigenvalues. Considerable effort has been dedicated to reducing or eliminating these unwanted numerical artifacts caused by spurious modes \cite{cendes1986development}. Rahman and Winkler noted that spurious modes fail to satisfy the zero divergence condition on the electric or magnetic field, and they proposed a penalty function to enforce this condition \cite{kobelansky1986eliminating, rahman1984penalty, winkler1984elimination}. Unfortunately, this approach does not completely eliminate the spurious modes and requires users to select an appropriate penalty function parameter.

Another approach to eliminating spurious modes in finite element waveguide problems involves selecting appropriate finite element approximation functions \cite{cendes1986development, MR592160, MR864305}. It is well-established that using $H(\rm{curl; \Omega})$-conforming basis functions (also known as edge elements) for the electric field \cite{MR592160, MR864305} ensure the continuity of tangential field components across interfaces between different media, while allowing normal field components to jump across such interfaces. With the edge element method, spurious modes with nonzero eigenvalues were eliminated. However, the number of spurious modes with zero eigenvalues equal the number of nodal points inside the computational domain, due to the violation of Gauss's law \cite{MR2652780}. Therefore, to remove these zero eigenvalues, it is necessary to enforce the divergence-free property (Gauss's law) of the eigenfunction in addition to using the proper finite element space. One effective approach for computing eigenvalues in waveguide problems is to employ 
$H(\rm{curl; \Omega})$-conforming basis functions to approximate the electric field, while imposing the divergence-free condition using a Lagrange multiplier, as suggested by Kikuchi \cite{MR912525}.

Meanwhile, higher-order methods like the spectral element method \cite{1573841, PhysRevE.79.026705} have been proposed to solve electromagnetic eigenvalue problems. Despite their high convergence rates, these methods also suffer from the presence of spurious zero eigenvalues. In \cite{MR3371510}, Liu et al. introduced the mixed spectral element method, which incorporates the divergence-free equation from Kikuchi's scheme into the spectral element method.

There are many methods to study the Maxwell eigenvalue problems. Based on a Helmholtz decomposition of the error, Boffi et al. \cite{MR3712172} presented an \textit{a posteriori} estimator of the error in the $L^2$-norm for the numerical approximation of the Maxwell eigenvalue problems using N${\rm \acute{e}}$d${\rm \acute{e}}$lec finite elements. Furthermore, based on edge finite elements, Boffi and Gastaldi \cite{MR3916956} proved the optimal convergence of a adaptive scheme. In \cite{MR3918688}, Boffi et al. introduced a residual error indicator for the N${\rm \acute{e}}$d${\rm \acute{e}}$lec finite element approximation for Maxwell eigenvalue problems. In addition, based on a domain decomposition method, Liang and Xu \cite{MR4566815} proposed a two-level preconditioned Helmholtz subspace iterative method for solving algebraic eigenvalue problems arising from edge element approximations of Maxwell eigenvalue problems. In certain settings, Boffi et al. \cite{MR4568428} prove the convergence of the discrete eigenvalues using Lagrange finite elements.

The outline of this paper is as follows. In Section 2, we introduce
the Maxwell eigenvalue problem and three equivalent variational forms are given.
In Section 3, we introduce the weak Galerkin (WG) finite element method for the Maxwell eigenvalue problem, then propose the WG numerical scheme. 
The error estimates of eigenfunctions and eigenvalues are analyzed in Section 4. Section 5 is devoted to prove the lower bounda of eigenvalues.
Numerical examples are presented to our theoretical analysis in Section 6.

\section{The Maxwell eigenvalue problem}
In this paper, for simplicity, we consider the following Maxwell eigenvalue problem
\begin{equation}\label{eig problem}
\left\{
\begin{aligned}
\nabla \times \mu_r^{-1} \nabla \times \mathbf{u} &= k^2 \varepsilon_r \mathbf{u},&\quad &\text{in }\Omega,&\\
\nabla \cdot \varepsilon_r \mathbf{u} &= 0,&\quad &\text{in }\Omega,&\\
\mathbf{u}\times \mathbf{n} &=0,&\quad &\text{on }\partial\Omega,&
\end{aligned}
\right.
\end{equation}
where $\Omega$ is a polygon region in $\Real^d$ $(d=2,3)$, and $\varepsilon_r$ and $\mu_r$ denote the relative dielectric permittivity and magnetic permeability tensors, respectively, and $\mathbf{n}$ is the
outward unit normal. For convenience, we consider the case where $\varepsilon_r$ and $\mu_r$ are constants in this paper.

The standard Sobolev space notation are also used in this paper.
 Let $D$ be any open bounded
domain with Lipschitz continuous boundary in $\mathbb{R}^d$ $(d=2, 3)$.
We use the standard definition for the Sobolev space $H^s(D)$ and
their associated inner products $(\cdot, \cdot)_{s, D}$, norms
$\|\cdot\|_{s, D}$, and seminorms $|\cdot|_{s, D}$ for any $s\ge 0$.
When $D=\Omega$, we shall drop the
subscript $D$ in the norm and in the inner product notation. Next, we define the curl operators in 2D:
$$
\nabla \times \mathbf{v}=\partial_x v_2-\partial_y v_1,\quad \nabla \times w=(\partial_y w, -\partial_x w)^T.$$
In addition, we define some Sobolev space used in this paper.
\begin{align*}
	&H({\rm curl}; \Omega)=\left\{\mathbf{v}\in [L^2(\Omega)]^d : \nabla \times \mathbf{v} \in [L^2(\Omega)]^{2d-3}\right\},\\
	&H_0({\rm curl}; \Omega)=\{\mathbf{v}\in H({\rm curl}; \Omega) : \mathbf{v}\times \mathbf{n}=0 \text{ on }\partial\Omega\},\\
	&H_0({\rm curl^0}; \Omega)=\{\mathbf{v}\in H_0({\rm curl}; \Omega) : \nabla \times \mathbf{v} =0\},\\
	&H({\rm div}; \Omega)=\left\{\mathbf{v}\in [L^2(\Omega)]^d : \nabla \cdot \mathbf{v} \in L^2(\Omega)\right\},\\
	&H({\rm div^0}; \Omega)=\{\mathbf{v}\in H({\rm div}; \Omega) : \nabla \cdot \mathbf{v}=0\}.
\end{align*}

Then we introduce the following Helmholtz decomposition:
$$H_0({\rm curl}; \Omega)=U_0\oplus\nabla H_0^1(\Omega),$$
where $U_0=\{\mathbf{v}\in H_0({\rm curl}; \Omega) : (\mathbf{v}, \nabla p)=0, \forall p\in H_0^1(\Omega)\}=\{\mathbf{v}\in H_0({\rm curl}; \Omega) : \nabla \cdot \mathbf{v}=0\}$ and $\nabla H_0^1(\Omega)=H_0({\rm curl^0}; \Omega)$.

The corresponding variational form of (\ref{eig problem}) is: Find $\lambda\in \mathbb{R}$ and $\mathbf{u}\in H_0({\rm curl}; \Omega) \setminus \{0\}$ such that
\begin{equation}\label{form 1}
	\left\{
	\begin{aligned}
		(\mu_r^{-1} \nabla \times \mathbf{u}, \nabla \times \mathbf{v}) &= \lambda (\varepsilon_r \mathbf{u}, \mathbf{v}),& \quad &\forall \mathbf{v}\in H_0({\rm curl}; \Omega),&\\
		(\varepsilon_r \mathbf{u}, \nabla p)&= 0,& \quad &\forall p \in H_0^1(\Omega).&
	\end{aligned}
	\right.
\end{equation}

Here we introduce a mixed form \cite{MR912525} that is equivalent to (\ref{form 1}): Find $\lambda\in \mathbb{R}$ and $\mathbf{u}\in H_0({\rm curl}; \Omega) \setminus \{0\}$, $p\in H_0^1(\Omega)$ such that
\begin{equation}
	\label{form 2}
	\left\{
	\begin{aligned}
		(\mu_r^{-1}\nabla \times \mathbf{u},\nabla \times \mathbf{v})+(\nabla p,\varepsilon_r \mathbf{v})&=\lambda(\varepsilon_r \mathbf{u},\mathbf{v})&\quad &\forall \mathbf{v}\in H_0({\rm curl}; \Omega),&\\ 
		(\varepsilon_r \mathbf{u},\nabla q)&=0 &\quad &\forall q\in H_0^1(\Omega).&
	\end{aligned}
	\right.
\end{equation}

Using the Helmholtz decomposition, we have the following equivalent form: Find $\lambda \in \mathbb{R}$ and $\mathbf{u}\in U_0 \setminus \{0\}$ such that
\begin{equation}
	\label{form 3}
	(\mu_r^{-1}\nabla \times \mathbf{u},\nabla \times \mathbf{v})=\lambda(\varepsilon_r\mathbf{u},\mathbf{v}),\quad \forall \mathbf{v}\in U_0.
\end{equation}

In this paper, we only consider form (\ref{form 3}). Therefore, we give the variational form of Maxwell eigenvalue problem (\ref{eig problem}): Find $\lambda \in \mathbb{R}$ and $\mathbf{u} \in U_0\setminus \{0\}$ such that
\begin{equation}
	\label{weak form}
	a(\mathbf{u},\mathbf{v})=\lambda b(\mathbf{u},\mathbf{v}),\quad \forall \mathbf{v}\in U_0,
\end{equation}
where
\begin{align*}
	a(\mathbf{u},\mathbf{v})=&(\mu_r^{-1}\nabla \times \mathbf{u}, \nabla \times \mathbf{v}),\\
	b(\mathbf{u},\mathbf{v})=&(\varepsilon_r\mathbf{u},\mathbf{v}).
\end{align*}

\section{The weak Galerkin finite element method}
In this section, the weak Galerkin finite element numerical scheme of Maxwell eigenvalue problem is given.

Let $\T_h$ be a partition of the domain $\Omega$, and the elements
in $\T_h$ are polygons satisfying the regular assumptions specified
in \cite{MR3223326}. Denote by $\E_h$ the edges in $\T_h$, and by
$\E_h^0$ the interior edges $\E_h\backslash \partial\Omega$. For
each element $T\in\T_h$, $h_T$ represents the diameter of $T$, and
$h=\max_{T\in\T_h} h_T$ denotes the mesh size. Let $C$ be a positive constant which is
independent of the mesh size and $a \lesssim b$ denote $a\leq Cb$.

Now we introduce the WG space for the eigenvalue problem (\ref{eig problem}).
For a given integer $k\ge 1$, define the WG finite element space
\begin{align*}
	&V_h=\{\mathbf{v}_h=\{\mathbf{v}_0,\mathbf{v}_b\} : \mathbf{v}_0|_T\in [P_k(T)]^d, \mathbf{v}_b|_e \in [P_k(e)]^d, e \in \mathcal {E}_h, \mathbf{v}_b \times \mathbf{n}=0 \text{ on } \partial \Omega\},\\
	&W_h=\{w=\{w_0,w_b\} : w_0|_T \in P_{k-1}(T),w_b|_e \in P_k(e), e \in \mathcal {E}_h, w_b=0 \text{ on } \partial \Omega\}.
\end{align*}

Next, we define the following weak curl operator.
\begin{definition}
	For any $\mathbf{v}_h\in V_h$, its weak curl is defined as the polynomial $\nabla_w \times \mathbf{v}_h|_T $ is the unique polynomial in
	$[P_{k-1}(T)]^{2d-3}$ satisfying
	\begin{eqnarray}
		\label{weak curl}
		(\nabla_w \times \mathbf{v}_h, \mathbf{q})_T=(\mathbf{v}_0,\nabla \times \mathbf{q})_T-\langle \mathbf{v}_b\times \mathbf{n},\mathbf{q}
		\rangle_{\partial T},\quad\forall\mathbf{q}\in [P_{k-1}(T)]^{2d-3}.
	\end{eqnarray}
\end{definition}
In addition, we define the following weak gradient operator.
\begin{definition}
	For any $p_h\in W_h$, its weak gradient is defined as the polynomial $\nabla_w p_h|_T $ is the unique polynomial in
	$[P_{k}(T)]^{d}$ satisfying
	\begin{eqnarray}
		\label{weak gradient}
		(\nabla_w p_h, \phi)_T=-(p_0,\nabla \cdot \phi)_T+\langle p_b,\phi\cdot \mathbf{n}
		\rangle_{\partial T},\quad\forall\phi\in [P_{k}(T)]^{d}.
	\end{eqnarray}
\end{definition}

Then we define the subspace $V_h^0$ of $V_h$:
$$
V_h^0=\{\mathbf{v}_h\in V_h : (\mathbf{v}_0, \nabla_w q)=0, \forall q\in W_h\}.
$$
For any $\mathbf{u}_h, \mathbf{v}_h\in V_h^0$, define the following bilinear forms:
\begin{align*}
	&s(\mathbf{u}_h, \mathbf{v}_h)=\gamma(h) \sum\limits_{T\in \mathcal{T}_{h}}h_T^{-1}\l\langle (\mathbf{u}_0-\mathbf{u}_b)\times \mathbf{n},(\mathbf{v}_0-\mathbf{v}_b)\times \mathbf{n}\rangle_{\partial T},\\
	&a_w(\mathbf{u}_h,\mathbf{v}_h)=\sum\limits_{T\in \mathcal{T}_{h}}(\mu_r^{-1}\nabla_w \times \mathbf{u}_h, \nabla_w \times \mathbf{v}_h)_T + s(\mathbf{u}_h, \mathbf{v}_h)\\
	&b_w(\mathbf{u}_h,\mathbf{v}_h)=(\varepsilon_r\mathbf{u}_0,\mathbf{v}_0),
\end{align*}
where $\gamma(h)=h^{\varepsilon}, 0<\varepsilon<1,\text{ or }\gamma(h)=-\frac{1}{\log(h)}$.

For further error analysis, we introduce some projection operators used
in this paper. Let $\mathbf{Q}_0$ be the $L^2$ projection from $[L^2(T)]^d$
onto $[P_k(T)]^d$, $Q_0$ be the $L^2$ projection from $L^2(T)$
onto $P_{k-1}(T)$.
Let $\mathbf{Q}_b$ denote the $L^2$ projection from $[L^2(e)]^d$ onto
$[P_{k}(e)]^d$, and $Q_b$ denote the $L^2$ projection from $L^2(T)$ onto $P_{k}(T)$. In addition, let $\mathbb{Q}_h$ be the $L^2$ projection from $[L^2(T)]^{2d-3}$
onto $[P_k(T)]^{2d-3}$.
Combining $\mathbf{Q}_0$ and $\mathbf{Q}_b$
together, we can define $\mathbf{Q}_h=\{\mathbf{Q}_0,\mathbf{Q}_b\}$. Analogously, denote $Q_h=\{Q_0,Q_b\}$.

The following commutativity properties are crucial in error analysis.
\begin{lemma}
	\label{projection}
	\cite[Lemma 5.1]{MR3394450}
	For any $\mathbf{v}\in H({\rm curl}; \Omega)$, we have
	$$\nabla_w \mathbf{Q}_h\mathbf{v}=\mathbb{Q}_h(\nabla \times \mathbf{v}),$$
	and for any $q\in H^1(\Omega)$, there holds
	$$\nabla_w Q_h q=\mathbf{Q}_0(\nabla q).$$
\end{lemma}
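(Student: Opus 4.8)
The plan is to prove both commutativity identities by the same template, exploiting that each weak operator is defined \emph{elementwise} as the unique polynomial satisfying a variational identity. Hence it suffices to fix an element $T$ and an arbitrary test polynomial of the prescribed degree, compute the $L^2(T)$-inner product of each side against it, and check that the two coincide; uniqueness of the defining polynomial then forces equality. Throughout, the two work-horses are, first, the defining orthogonality of the $L^2$-projections $\mathbf{Q}_0,\mathbf{Q}_b,Q_0,Q_b$, which lets us drop a projection whenever it is paired against a polynomial lying in its own range, and second, the elementwise Green's formulas for the curl and the gradient.

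Consider first the gradient identity $\nabla_w Q_h q=\mathbf{Q}_0(\nabla q)$. I would start from the definition (\ref{weak gradient}) of $\nabla_w Q_h q$ on $T$, tested against an arbitrary $\phi\in[P_k(T)]^d$. The volume term pairs $Q_0 q$ against $\nabla\cdot\phi$; since $\nabla\cdot\phi\in P_{k-1}(T)$ and $Q_0$ projects onto $P_{k-1}(T)$, I may replace $Q_0 q$ by $q$. The boundary term pairs $Q_b q$ against $\phi\cdot\mathbf{n}$, and $\phi\cdot\mathbf{n}|_e\in P_k(e)$ lies in the range of $Q_b$, so $Q_b q$ may be replaced by $q$. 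What remains, $-(q,\nabla\cdot\phi)_T+\langle q,\phi\cdot\mathbf{n}\rangle_{\partial T}$, collapses by the divergence theorem to $(\nabla q,\phi)_T$. This is exactly the relation defining $\mathbf{Q}_0(\nabla q)$ against $\phi\in[P_k(T)]^d$, and since both $\nabla_w Q_h q$ and $\mathbf{Q}_0(\nabla q)$ live in $[P_k(T)]^d$, the arbitrariness of $\phi$ yields the claim.

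The curl identity $\nabla_w\times\mathbf{Q}_h\mathbf{v}=\mathbb{Q}_h(\nabla\times\mathbf{v})$ follows the same route from definition (\ref{weak curl}), tested against $\mathbf{q}\in[P_{k-1}(T)]^{2d-3}$. In the volume term $\nabla\times\mathbf{q}$ has degree at most $k-1$, hence lies in $[P_k(T)]^d$, so $\mathbf{Q}_0\mathbf{v}$ may be replaced by $\mathbf{v}$. The boundary term is the delicate point and the main obstacle: it reads $\langle(\mathbf{Q}_b\mathbf{v})\times\mathbf{n},\mathbf{q}\rangle_{\partial T}$, and to discharge $\mathbf{Q}_b$ one must re-expose $\mathbf{Q}_b\mathbf{v}$ itself as the first slot of an edge inner product. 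Using that $\mathbf{n}$ is constant on each edge, I would rewrite the integrand $((\mathbf{Q}_b\mathbf{v})\times\mathbf{n})\,\mathbf{q}$ (a scalar triple product in $3$D, the analogous bilinear pairing in $2$D) as $\mathbf{Q}_b\mathbf{v}\cdot\bpsi$ with $\bpsi$ a vector polynomial built linearly from $\mathbf{q}$ and $\mathbf{n}$, of degree at most $k-1$ on $e$; since $\bpsi\in[P_k(e)]^d$ lies in the range of $\mathbf{Q}_b$, the projection drops and the term becomes $\langle\mathbf{v}\times\mathbf{n},\mathbf{q}\rangle_{\partial T}$. The elementwise Green's formula for the curl then turns $(\mathbf{v},\nabla\times\mathbf{q})_T-\langle\mathbf{v}\times\mathbf{n},\mathbf{q}\rangle_{\partial T}$ into $(\nabla\times\mathbf{v},\mathbf{q})_T$, which matches the defining relation of $\mathbb{Q}_h(\nabla\times\mathbf{v})$, and uniqueness concludes.

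Two bookkeeping points deserve attention. First, for the curl identity to typecheck, the range of $\mathbb{Q}_h$ and that of the weak curl must agree, so $\mathbb{Q}_h$ should be read as the $L^2$-projection onto $[P_{k-1}(T)]^{2d-3}$; the degree counts above are arranged precisely so that every dropped projection is paired against a polynomial of admissible degree. Second, since the statement is posed for $\mathbf{v}\in H(\mathrm{curl};\Omega)$ and $q\in H^1(\Omega)$, the edge traces and the integrations by parts are justified first for piecewise-smooth fields, the general case following by density; this is a technicality rather than a genuine difficulty. The only step requiring real care is the cross-product rewriting in the boundary term of the curl identity, everything else being a direct application of projection orthogonality and the Green's formulas.
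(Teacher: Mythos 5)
Your argument is correct and is exactly the standard proof of this commutativity result; the paper itself gives no proof, quoting the lemma from the cited reference, whose argument proceeds by the same template (test the defining identity of the weak operator, drop each $L^2$ projection against a test polynomial in its range, and apply the elementwise Green's formula). Your side remark that $\mathbb{Q}_h$ must be read as the projection onto $[P_{k-1}(T)]^{2d-3}$ for the identity to typecheck correctly identifies a slip in the paper's definition of $\mathbb{Q}_h$.
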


Now we give the numerical scheme of (\ref{eig problem}) corresponding to (\ref{form 3}).
\begin{algorithm1}
	Find $(\lambda_h, \mathbf{u}_h)\in\mathbb{R}\times V_h^0$ such that $\|\mathbf{u}_h\|=1$ and
	\begin{eqnarray}\label{WG-scheme}
		a_w(\mathbf{u}_h,\mathbf{v}_h)=\lambda_h b_w(\mathbf{u}_0,\mathbf{u}_0),\quad\forall \mathbf{v}_h\in V_h^0.
	\end{eqnarray}
\end{algorithm1}

Define the sum space $V=U_0+V_h^0$.
Now we define the following semi-norm on $V$. For any $\mathbf{v}\in V$,
\begin{eqnarray*}
	\|\mathbf{v}\|_V^2= \sumT \Big(\mu_r^{-1}\|\nabla \times \mathbf{v}_0\|_T^2+ h_T^{-1}\|
	(\mathbf{v}_0-\mathbf{v}_b)\times\mathbf{n}\|^2_{\partial T}\Big),
\end{eqnarray*}
where $\mathbf{v}_0$ is the internal value and $\mathbf{v}_b$ is the boundary value.

The following Friedrichs inequality \cite{MR3097958} holds on $U_0$.
\begin{lemma}
	\label{Friedrichs inequality}
	Let $\Omega$ be a bounded Lipschitz domain, there exists a positive constant $C$ such that
	$$
	\|\mathbf{u}\|\leq C\|\nabla \times \mathbf{u}\|, \quad \forall \mathbf{u}\in U_0.$$
\end{lemma}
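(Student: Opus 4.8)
The plan is to argue by contradiction, the essential analytic input being the compactness of a suitable embedding into $[L^2(\Omega)]^d$. Suppose the asserted inequality fails for every constant. Then for each $n\in\mathbb{N}$ I can find $\mathbf{u}_n\in U_0$ with $\|\mathbf{u}_n\|=1$ but $\|\nabla\times\mathbf{u}_n\|\le 1/n$. In particular $\{\mathbf{u}_n\}$ is bounded in $H({\rm curl};\Omega)$, and since $U_0\subset H({\rm div^0};\Omega)$ with vanishing tangential trace, the whole sequence sits in a bounded subset of $H_0({\rm curl};\Omega)\cap H({\rm div^0};\Omega)$.

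The key ingredient I would invoke is the Weber--Picard compact embedding theorem: on a bounded Lipschitz domain $\Omega$, the space $H_0({\rm curl};\Omega)\cap H({\rm div};\Omega)$ embeds compactly into $[L^2(\Omega)]^d$. Applying this to the bounded sequence $\{\mathbf{u}_n\}$, I extract a subsequence (not relabeled) converging strongly in $[L^2(\Omega)]^d$ to a limit $\mathbf{u}$, so that $\|\mathbf{u}\|=1$. Because $\|\nabla\times\mathbf{u}_n\|\to 0$ and the curl is a closed operator, the convergence is in fact strong in $H({\rm curl};\Omega)$, and passing to the limit in $(\nabla\times\mathbf{u}_n,\boldsymbol{\phi})=(\mathbf{u}_n,\nabla\times\boldsymbol{\phi})$ for test fields $\boldsymbol{\phi}$ gives $\nabla\times\mathbf{u}=0$; since $H_0({\rm curl};\Omega)$ is closed in $H({\rm curl};\Omega)$, the limit satisfies $\mathbf{u}\in H_0({\rm curl^0};\Omega)$.

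It then remains to reach the contradiction. Passing to the limit in the defining relation $(\mathbf{u}_n,\nabla p)=0$ for all $p\in H_0^1(\Omega)$, which survives the $L^2$ convergence, shows $\mathbf{u}\in U_0$ as well. But by the Helmholtz decomposition $H_0({\rm curl};\Omega)=U_0\oplus\nabla H_0^1(\Omega)$ together with the identity $H_0({\rm curl^0};\Omega)=\nabla H_0^1(\Omega)$ recorded earlier, the spaces $U_0$ and $H_0({\rm curl^0};\Omega)$ are $L^2$-orthogonal, so their intersection is trivial and $\mathbf{u}=0$, contradicting $\|\mathbf{u}\|=1$. This establishes the inequality. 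I expect the main obstacle to be the compact embedding invoked in the second step: it is exactly where the Lipschitz regularity of $\partial\Omega$ is used, since on rough domains the divergence-free, tangentially vanishing fields need not be compactly embedded, and all the topological triviality of the kernel is funneled through the stated Helmholtz decomposition.
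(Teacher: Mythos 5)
The paper does not actually prove this lemma: it is stated as a known Friedrichs inequality and dispatched with a single citation to \cite{MR3097958}, so there is no in-paper argument to compare against. Your compactness-by-contradiction proof is the standard way this inequality is established in the Maxwell literature, and it is sound: the normalization $\|\mathbf{u}_n\|=1$, $\|\nabla\times\mathbf{u}_n\|\to 0$ places the sequence in a bounded subset of $H_0({\rm curl};\Omega)\cap H({\rm div}^0;\Omega)$, the Weber--Picard embedding (which is exactly the compactness the paper itself later invokes as \cite[Theorem 4.7]{MR2059447} to verify (A2)) yields a strong $L^2$ limit $\mathbf{u}$ of unit norm, and the limit lies in $U_0\cap H_0({\rm curl}^0;\Omega)$, which is trivial by the stated Helmholtz decomposition --- taking $p=\phi$ in $(\nabla\phi,\nabla p)=0$ forces $\mathbf{u}=\nabla\phi=0$. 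What your approach buys over the bare citation is that it makes visible exactly which hypotheses carry the result: the Lipschitz regularity enters only through the compact embedding, and the triviality of $U_0\cap H_0({\rm curl}^0;\Omega)$ enters through the topology of $\Omega$. You rightly flag the latter: for a bounded Lipschitz domain whose boundary is disconnected, the space of harmonic fields $\{\mathbf{v}:\nabla\times\mathbf{v}=0,\ \nabla\cdot\mathbf{v}=0,\ \mathbf{v}\times\mathbf{n}=0\}$ is nontrivial, its elements belong to $U_0$, and the inequality as stated fails; so the lemma implicitly assumes the same topological triviality under which the paper writes $H_0({\rm curl};\Omega)=U_0\oplus\nabla H_0^1(\Omega)$ with $\nabla H_0^1(\Omega)=H_0({\rm curl}^0;\Omega)$. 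Within that setting your proof is complete.
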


Therefore, from Lemma \ref{Friedrichs inequality}, $\|\cdot\|_V$ is a norm on $U_0$.
\begin{lemma}
	$\|\cdot\|_V$ defines a norm on $V_h^0$.
\end{lemma}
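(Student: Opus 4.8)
The plan is to verify the three defining properties of a norm, of which only positive-definiteness carries any content. Observe that $\|\mathbf{v}\|_V^2=\langle\mathbf{v},\mathbf{v}\rangle_V$ for the symmetric bilinear form
\[
\langle\mathbf{u},\mathbf{v}\rangle_V=\sum_{T\in\T_h}\Big(\mu_r^{-1}(\nabla\times\mathbf{u}_0,\nabla\times\mathbf{v}_0)_T+h_T^{-1}\langle(\mathbf{u}_0-\mathbf{u}_b)\times\mathbf{n},(\mathbf{v}_0-\mathbf{v}_b)\times\mathbf{n}\rangle_{\partial T}\Big),
\]
which is positive semi-definite because $\mu_r^{-1}$ and $h_T^{-1}$ are positive. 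Hence $\|\cdot\|_V$ is automatically a semi-norm: non-negativity and homogeneity are clear, and the triangle inequality follows from Cauchy--Schwarz for $\langle\cdot,\cdot\rangle_V$. It therefore remains to prove that, for $\mathbf{v}_h=\{\mathbf{v}_0,\mathbf{v}_b\}\in V_h^0$, the condition $\|\mathbf{v}_h\|_V=0$ forces $\mathbf{v}_h=0$.

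First I would use that $\|\mathbf{v}_h\|_V=0$ makes every summand vanish, so that $\nabla\times\mathbf{v}_0=0$ on each $T\in\T_h$ and $(\mathbf{v}_0-\mathbf{v}_b)\times\mathbf{n}=0$ on every $\partial T$. On an interior edge shared by $T_1,T_2$ the second identity, together with the single-valuedness of $\mathbf{v}_b$ and $\mathbf{n}_1=-\mathbf{n}_2$, gives $\mathbf{v}_0|_{T_1}\times\mathbf{n}=\mathbf{v}_0|_{T_2}\times\mathbf{n}$, i.e. the tangential trace of $\mathbf{v}_0$ is continuous across $\E_h^0$; on $\partial\Omega$ it gives $\mathbf{v}_0\times\mathbf{n}=\mathbf{v}_b\times\mathbf{n}=0$. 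Consequently the piecewise polynomial $\mathbf{v}_0$ lies in $H_0({\rm curl};\Omega)$ and its distributional curl coincides with the (vanishing) elementwise curl, so $\mathbf{v}_0\in H_0({\rm curl}^0;\Omega)$. By the Helmholtz decomposition $H_0({\rm curl}^0;\Omega)=\nabla H_0^1(\Omega)$ recalled in Section 2, there is $\phi\in H_0^1(\Omega)$ with $\mathbf{v}_0=\nabla\phi$.

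Next I would activate the constraint defining $V_h^0$. Choose $q=Q_h\phi\in W_h$, which is admissible since $q_b=Q_b\phi=0$ on $\partial\Omega$ as $\phi\in H_0^1(\Omega)$. The commutativity property of Lemma~\ref{projection} then yields $\nabla_w Q_h\phi=\mathbf{Q}_0(\nabla\phi)=\mathbf{Q}_0\mathbf{v}_0=\mathbf{v}_0$, the last step using $\mathbf{v}_0\in[P_k(T)]^d$. Because $\mathbf{v}_h\in V_h^0$, we obtain $0=(\mathbf{v}_0,\nabla_w Q_h\phi)=(\mathbf{v}_0,\mathbf{v}_0)=\|\mathbf{v}_0\|^2$, hence $\mathbf{v}_0=0$. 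Substituting this into $(\mathbf{v}_0-\mathbf{v}_b)\times\mathbf{n}=0$ shows the tangential trace $\mathbf{v}_b\times\mathbf{n}$ vanishes on every edge; since the boundary datum of a function in $V_h^0$ enters the norm, the weak curl and the stabilizer $s(\cdot,\cdot)$ only through $\mathbf{v}_b\times\mathbf{n}$, this gives $\mathbf{v}_h=0$ and completes the argument.

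I expect the main obstacle to be the passage from the two elementwise consequences of $\|\mathbf{v}_h\|_V=0$ to the global identity $\mathbf{v}_0=\nabla\phi$ with $\phi\in H_0^1(\Omega)$: one must argue carefully that tangential continuity upgrades the piecewise-polynomial field to an $H_0({\rm curl};\Omega)$ function whose global curl equals the elementwise curl, so that the Helmholtz decomposition applies. The role of the $V_h^0$ constraint is the second delicate point and is indispensable—without it, nonzero discrete gradients would satisfy $\|\cdot\|_V=0$, and it is precisely the identification $\nabla_w Q_h\phi=\mathbf{v}_0$ via Lemma~\ref{projection} that rules them out. The remaining verifications are routine.
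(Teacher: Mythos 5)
Your proof is correct, and its second half takes a genuinely different route from the paper's. Both arguments begin identically: from $\|\mathbf{v}_h\|_V=0$ you extract the elementwise curl-free condition and the tangential continuity $(\mathbf{v}_0-\mathbf{v}_b)\times\mathbf{n}=0$, and conclude that $\mathbf{v}_0=\nabla\phi$ globally with $\phi\in H_0^1(\Omega)$ (you make explicit, via the Helmholtz decomposition $H_0({\rm curl}^0;\Omega)=\nabla H_0^1(\Omega)$, the upgrade from piecewise to distributional curl that the paper leaves implicit). The divergence is in how the constraint defining $V_h^0$ is exploited. The paper tests with $q_h=\{\nabla\cdot\mathbf{v}_0,0\}$ and $q_h=\{0,[\mathbf{v}_0\cdot\mathbf{n}]\}$ to deduce that $\mathbf{v}_0$ is elementwise divergence-free with continuous normal trace, hence that $\phi$ is harmonic, and then invokes uniqueness for the Laplace equation with constant boundary data to force $\phi$ to be constant; this requires the extra PDE argument and the simple-connectedness caveat appears explicitly there. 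You instead test with the single function $Q_h\phi\in W_h$ and use the commutativity of Lemma~\ref{projection} to get $\nabla_w Q_h\phi=\mathbf{Q}_0\nabla\phi=\mathbf{Q}_0\mathbf{v}_0=\mathbf{v}_0$, whence $0=(\mathbf{v}_0,\nabla_w Q_h\phi)=\|\mathbf{v}_0\|^2$ in one stroke. Your version is shorter, avoids the harmonicity and uniqueness machinery entirely, and localizes the topological hypothesis in the single citation of the Helmholtz decomposition; the paper's version is more elementary in that it never leaves the discrete setting after producing $\phi$, but at the cost of a somewhat murkier chain of deductions.

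One shared loose end worth flagging: since $\mathbf{v}_b|_e$ is a full vector in $[P_k(e)]^d$, the conclusion $\mathbf{v}_b\times\mathbf{n}=0$ does not literally give $\mathbf{v}_b=0$; the normal component of $\mathbf{v}_b$ is invisible to $\|\cdot\|_V$, to $\nabla_w\times$, and to the constraint defining $V_h^0$. You acknowledge this by noting that only $\mathbf{v}_b\times\mathbf{n}$ ever enters the scheme; the paper silently asserts $\mathbf{v}_b=0$. Strictly speaking the lemma as stated requires either identifying $\mathbf{v}_b$ with its tangential part or quotienting out the normal component, but this is a defect of the formulation rather than of your argument.
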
 
\begin{proof}
	We prove that $\|\mathbf{v}_h\|_V=0$ if and only if $\mathbf{v}_h=0$.
	If $\mathbf{v}_h=0$, obviously, $\|\mathbf{v}_h\|_V=0$.
	Next, assume that $\|\mathbf{v}_h\|_V=0$. Then we have $\nabla \times \mathbf{v}_0=0$ for any $T\in \mathcal{T}_h$, and $(\mathbf{v}_0-\mathbf{v}_b)\times\mathbf{n}=0$ for any $e\in \mathcal{E}_h$.
	Notice that $\mathbf{v}_h \in V_h^0$, then for any $q_h\in W_h$, there holds
	\begin{align*}
		b_w(\mathbf{v}_0, q_h)&=\sumT(\varepsilon_r\mathbf{v}_0, \nabla_wq_h)_T\\
		&=-\sumT(\nabla \cdot (\varepsilon_r \mathbf{v}_0), q_0)_T+\sumT \langle \varepsilon_r \mathbf{v}_0\cdot \mathbf{n}, q_b \rangle_{\partial T}\\
		&=0.
	\end{align*}
	Specially, let $q_0=\nabla \cdot \mathbf{v}_0$ and $q_b=0$, then we have $\nabla \cdot \mathbf{v}_0=0$ for any $T\in \mathcal{T}_h$. Let $q_0=0$ and $q_b$ is the jump of $\mathbf{v}_0\cdot \mathbf{n}$ on $e$, then we can get that $\mathbf{v}_0\cdot \mathbf{n}$ is continuous on $e$. 
	Notice that $\nabla \times \mathbf{v}_0= 0$, then there exists a potential function $\phi$ such that $\mathbf{v}_0 =\nabla \phi$ in $\Omega$. In addition,
	$\nabla \cdot \mathbf{v}_0= 0$ and $\mathbf{v}_0 \cdot \mathbf{n}$ is continuous, there holds $\phi = 0$
	in $\Omega$. Notice that $\mathbf{v}_b \times \mathbf{n}=0$ on 
	$\partial \Omega$, and $(\mathbf{v}_0-\mathbf{v}_b) \times \mathbf{n}=0$ for any $e\in \mathcal{E}_h$,
	Thus, $\mathbf{v}_0\times\mathbf{n}=\nabla \phi=0$ on $\partial \Omega$. Therefore,
	$\phi$ must be a constant on $\partial \Omega$. From the uniqueness of the solution of the Laplace equation, if $\phi$ is simply connected, $\phi$ must be a constant in $\Omega$. 
	Then $\mathbf{v}_0=\nabla \phi=0$ and $\mathbf{v}_b=0$.
\end{proof}

\section{Error analysis}
By limiting to subspaces, we transform the Maxwell eigenvalue problem into an elliptic eigenvalue problem. Therefore, we can use the framework presented in \cite{MR3919912} and just prove Assumption 1 (A1)-Assumption 7 (A7) in Section 2 of \cite{MR3919912}.

First we verify (A1):
$a(\cdot,\cdot)$ and $a_w(\cdot,\cdot)$ are symmetric, and
\begin{align*}
	&a(\mathbf{w},\mathbf{w})\geq \gamma\Vert \mathbf{w} \Vert_V^2,\quad \forall \mathbf{w} \in U_0,\\
	&a_w(\mathbf{v}_h,\mathbf{v}_h)\geq \gamma(h)\Vert \mathbf{v}_h\Vert_V^2,\quad \forall \mathbf{v}_h \in V_h,
\end{align*}
where $\gamma$ and $\gamma(h)$ are two positive constants for given $h$.

Obviously, $a(\cdot, \cdot)$ is coercive and continuous on $U_0$. And $a_w(\cdot,\cdot)$ is continuous on $V_h^0$, we only prove that $a_w(\cdot,\cdot)$ is coercive on $V_h^0$.
\begin{lemma}
	For any $\mathbf{v}_{h} \in V^0_{h}$, we have
	$$a_{w}(\mathbf{v}_{h},\mathbf{v}_{h})\geq C \gamma(h)\Vert \mathbf{v}_{h} \Vert_{V}^{2}.$$
\end{lemma}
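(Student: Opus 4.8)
The plan is to reduce the coercivity to a comparison between the weak curl $\nabla_w\times\mathbf{v}_h$ and the elementwise classical curl $\nabla\times\mathbf{v}_0$, after which the stabilization term $s(\cdot,\cdot)$ controls the discrepancy. First I would recall the integration by parts identity $(\nabla\times\mathbf{v}_0,\mathbf{q})_T=(\mathbf{v}_0,\nabla\times\mathbf{q})_T-\langle\mathbf{v}_0\times\mathbf{n},\mathbf{q}\rangle_{\partial T}$, valid for $\mathbf{q}\in[P_{k-1}(T)]^{2d-3}$, and subtract it from the defining relation (\ref{weak curl}) of the weak curl. Since the boundary term there carries $\mathbf{v}_b$ rather than $\mathbf{v}_0$, the volume terms cancel and I obtain
\begin{equation*}
(\nabla_w\times\mathbf{v}_h-\nabla\times\mathbf{v}_0,\mathbf{q})_T=\langle(\mathbf{v}_0-\mathbf{v}_b)\times\mathbf{n},\mathbf{q}\rangle_{\partial T},\quad\forall\mathbf{q}\in[P_{k-1}(T)]^{2d-3}.
\end{equation*}
Because $\mathbf{v}_0\in[P_k(T)]^d$ forces $\nabla\times\mathbf{v}_0\in[P_{k-1}(T)]^{2d-3}$, the difference $\nabla_w\times\mathbf{v}_h-\nabla\times\mathbf{v}_0$ lies in the admissible test space and may itself be chosen as $\mathbf{q}$.

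Taking $\mathbf{q}=\nabla_w\times\mathbf{v}_h-\nabla\times\mathbf{v}_0$, applying the Cauchy--Schwarz inequality on $\partial T$, and then invoking the standard trace/inverse inequality $\|\mathbf{q}\|_{\partial T}\lesssim h_T^{-1/2}\|\mathbf{q}\|_T$ for polynomials (where the shape-regularity assumptions of \cite{MR3223326} are genuinely used), I would cancel one common factor to reach the elementwise estimate
\begin{equation*}
\|\nabla_w\times\mathbf{v}_h-\nabla\times\mathbf{v}_0\|_T^2\lesssim h_T^{-1}\|(\mathbf{v}_0-\mathbf{v}_b)\times\mathbf{n}\|_{\partial T}^2.
\end{equation*}
A triangle inequality then yields $\|\nabla\times\mathbf{v}_0\|_T^2\lesssim\|\nabla_w\times\mathbf{v}_h\|_T^2+h_T^{-1}\|(\mathbf{v}_0-\mathbf{v}_b)\times\mathbf{n}\|_{\partial T}^2$, and summing over $T\in\mathcal{T}_h$ (absorbing the constant $\mu_r^{-1}$) gives
\begin{equation*}
\|\mathbf{v}_h\|_V^2\lesssim\sum_{T\in\mathcal{T}_h}\mu_r^{-1}\|\nabla_w\times\mathbf{v}_h\|_T^2+\sum_{T\in\mathcal{T}_h}h_T^{-1}\|(\mathbf{v}_0-\mathbf{v}_b)\times\mathbf{n}\|_{\partial T}^2.
\end{equation*}

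Finally I would multiply this inequality by $\gamma(h)$ and use that $\gamma(h)\le 1$ for $h$ sufficiently small (true for both $\gamma(h)=h^{\varepsilon}$ and $\gamma(h)=-1/\log(h)$), so that $\gamma(h)\sum_{T}\mu_r^{-1}\|\nabla_w\times\mathbf{v}_h\|_T^2\le\sum_{T}\mu_r^{-1}\|\nabla_w\times\mathbf{v}_h\|_T^2$. The resulting right-hand side is precisely $a_w(\mathbf{v}_h,\mathbf{v}_h)$ up to a constant, which delivers $a_w(\mathbf{v}_h,\mathbf{v}_h)\ge C\gamma(h)\|\mathbf{v}_h\|_V^2$. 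I expect the main obstacle to be the bookkeeping of the two mismatched weights: the stabilization already carries $\gamma(h)$ while the weak-curl term does not, so the comparison can only produce the factor $\gamma(h)$ in front of the $V$-norm, and one must verify that no step implicitly divides by $\gamma(h)$. I note that the argument never uses the discrete divergence-free constraint defining $V_h^0$, so the coercivity in fact holds on all of $V_h$; the restriction to $V_h^0$ in the statement is immaterial for this step.
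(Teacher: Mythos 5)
Your proof is correct and follows essentially the same route as the paper's: both reduce the coercivity to showing that $\sum_{T}\|\nabla\times\mathbf{v}_0\|_T^2$ is controlled by the weak-curl term plus the stabilizer, using the defining relation of $\nabla_w\times$ together with integration by parts and a polynomial trace inequality. The only difference is organizational — you test the commutation identity with $\mathbf{q}=\nabla_w\times\mathbf{v}_h-\nabla\times\mathbf{v}_0$ and conclude directly by a triangle inequality, whereas the paper tests with $\mathbf{q}=\nabla\times\mathbf{v}_0$ and absorbs terms via Young's inequality; your closing observation that the constraint defining $V_h^0$ is never used is also consistent with the paper, whose statement of (A1) asserts the coercivity on all of $V_h$.
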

\begin{proof}
	It is easy to prove
	$$\sumT h_T^{-1}\|
	(\mathbf{v}_0-\mathbf{v}_b)\times\mathbf{n}\|^2_{\partial T}\leq C\gamma(h)^{-1}a_{w}(\mathbf{v}_{h},\mathbf{v}_{h}).$$
	From integration by parts, (\ref{weak curl}), Young inequality, trace inequality \cite{MR3223326} and inverse inequality \cite{MR3223326}, we have
	\begin{align*}
		&\sumT(\nabla \times \mathbf{v}_0, \nabla \times \mathbf{v}_0)_T\\
		=&\sumT(\nabla \times \nabla \times \mathbf{v}_0, \mathbf{v}_0)_T-\sumT\langle \nabla \times \mathbf{v}_0,  \mathbf{v}_0 \times \mathbf{n} \rangle_{\partial T}\\
		=&\sumT(\nabla_w \times \mathbf{v}_h, \nabla \times \mathbf{v}_0)_T+\sumT\langle \nabla \times \mathbf{v}_0, (\mathbf{v}_b-\mathbf{v}_0)\times \mathbf{n} \rangle_{\partial T}\\
		\leq &C\left( \sumT\|\nabla_w \times \mathbf{v}_h\|_T^2\right)^{1/2}\left(\sumT \|\nabla \times \mathbf{v}_0\|_T^2 \right)^{1/2}\\
		&+C\left( \sumT h_T\|\nabla \times \mathbf{v}_0\|_{\partial T}^2\right)^{1/2}\left(h_T^{-1}\|(\mathbf{v}_b-\mathbf{v}_0)\times \mathbf{n}\|_{\partial T}^2\right)^{1/2}\\
		\leq &C \sumT\|\nabla_w \times \mathbf{v}_h\|_T^2+\frac{1}{4}\sumT \|\nabla \times \mathbf{v}_0\|_T^2\\
		&+C\sumT h_T^{-1}\|(\mathbf{v}_b-\mathbf{v}_0)\times \mathbf{n}\|_{\partial T}^2 + \frac{1}{4}\sumT h_T\| \nabla \times \mathbf{v}_0\|_{\partial T}^2\\
		\leq &C \sumT\|\nabla_w \times \mathbf{v}_h\|_T^2+\frac{1}{4}\sumT \|\nabla \times \mathbf{v}_0\|_T^2\\
		&+C\sumT h_T^{-1}\|(\mathbf{v}_b-\mathbf{v}_0)\times \mathbf{n}\|_{\partial T}^2 + \frac{1}{4}\sumT \| \nabla \times \mathbf{v}_0\|_{T}^2\\
		&\leq \frac{1}{2}\sumT \| \nabla \times \mathbf{v}_0\|_{T}^2+C\gamma(h)^{-1}a_w(\mathbf{v}_h,\mathbf{v}_h),
	\end{align*}
which implies that
$$
\sumT\|\nabla \times \mathbf{v}_0\|_T^2\leq C\gamma(h)^{-1}a_w(\mathbf{v}_h,\mathbf{v}_h).
$$
Combining the two equations above, we have
$$
\Vert \mathbf{v}_{h} \Vert_{V}^{2}\leq C \gamma(h)^{-1}a_{w}(\mathbf{v}_{h},\mathbf{v}_{h}),$$
which completes the proof.
\end{proof}

Before verifying (A2), we define the following two operators $K$ and $K_h$.
\begin{align*}
	&K: L^2(\Omega)\rightarrow U_{0}, \quad a(K\mathbf{f},\mathbf{v})=b(\mathbf{f},\mathbf{v}), \quad \forall \mathbf{v} \in U_{0},\\
	&K_{h}: V^0_{h}+L^2(\Omega)\rightarrow V_{h}^0,\quad a_{w}(K_{h}\mathbf{f}_{h},\mathbf{v}_{h})=b_{w}(\mathbf{f}_{h},\mathbf{v}_{h}), \quad \forall \mathbf{v}_{h}\in V^0_{h}.
\end{align*}

It follows from \cite[Lemma 5.3]{MR3325251} and \cite[Lemma 7.1]{MR3286455} that
$$
\|\mathbf{v}_0\|\leq C\|\mathbf{v}_h\|_V.
$$
Therefore, it is easy to prove that $K$ and $K_{h}$ are well-defined by the Lax-Milgram theorem.

Next we verify (A2): $K$ and $K_h$ are compact.
\begin{lemma}
	$K$ and $K_h$ are compact operators from $U_0$ onto $U_0$ and $V_h^0$ onto $V_h^0$, respectively.
\end{lemma}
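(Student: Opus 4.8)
The plan is to treat the two operators separately, because the compactness of $K_h$ is elementary while that of $K$ rests on a compact embedding. For $K_h$, I would simply observe that $V_h^0$ is a finite-dimensional subspace of $V_h$, so the linear map $K_h$ restricted to $V_h^0$ has finite-dimensional range and is therefore a finite-rank, hence compact, operator. Boundedness of $K_h$ follows at once from the coercivity estimate $a_w(\mathbf{v}_h,\mathbf{v}_h)\ge C\gamma(h)\|\mathbf{v}_h\|_V^2$ of the preceding lemma together with the continuity of $b_w$; no further argument is needed, since a bounded operator whose range lies in a finite-dimensional space is automatically compact.

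The substance is the compactness of $K:U_0\to U_0$, which I would establish in three steps. First, boundedness: for $\mathbf{f}\in U_0$, testing the definition of $K\mathbf{f}$ with $\mathbf{v}=K\mathbf{f}$ and using the coercivity in (A1), the Cauchy--Schwarz inequality for $b$, and the Friedrichs inequality (Lemma \ref{Friedrichs inequality}) gives
$$\gamma\|K\mathbf{f}\|_V^2\le a(K\mathbf{f},K\mathbf{f})=b(\mathbf{f},K\mathbf{f})\le C\|\mathbf{f}\|\,\|K\mathbf{f}\|\le C\|\mathbf{f}\|\,\|K\mathbf{f}\|_V,$$
so that $\|K\mathbf{f}\|_V\le C\|\mathbf{f}\|$. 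Second, extraction of a convergent subsequence: given a bounded sequence $\{\mathbf{f}_n\}$ in $U_0$, since every element of $U_0$ lies in $H_0(\mathrm{curl};\Omega)$ and is divergence-free, and since $\Omega$ is a bounded Lipschitz domain, the embedding $U_0\hookrightarrow[L^2(\Omega)]^d$ is compact, so a subsequence (still denoted $\{\mathbf{f}_n\}$) converges in $[L^2(\Omega)]^d$. Third, I would show $\{K\mathbf{f}_n\}$ is Cauchy in the energy norm: applying the first step to the difference $\mathbf{f}_n-\mathbf{f}_m\in U_0$ and again invoking Friedrichs,
$$\gamma\|K(\mathbf{f}_n-\mathbf{f}_m)\|_V^2\le b(\mathbf{f}_n-\mathbf{f}_m,K(\mathbf{f}_n-\mathbf{f}_m))\le C\|\mathbf{f}_n-\mathbf{f}_m\|\,\|K(\mathbf{f}_n-\mathbf{f}_m)\|_V,$$
whence $\|K(\mathbf{f}_n-\mathbf{f}_m)\|_V\le C\|\mathbf{f}_n-\mathbf{f}_m\|\to 0$. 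Thus $\{K\mathbf{f}_n\}$ converges in $U_0$, which proves $K$ compact.

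The main obstacle is the second step: the compact embedding of the divergence-free, tangentially-vanishing space $U_0$ into $[L^2(\Omega)]^d$. This is \emph{not} a consequence of the Friedrichs inequality alone; it is the Weber--Weck compactness property for Maxwell's equations, which holds precisely because $\Omega$ is assumed to be a bounded Lipschitz domain. I would therefore cite this compactness result explicitly rather than attempt to reprove it, and then the remaining two steps are routine energy estimates built from (A1) and Lemma \ref{Friedrichs inequality}.
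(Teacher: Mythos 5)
Your proposal is correct and follows essentially the same route as the paper: both arguments rest on the finite-rank observation for $K_h$ and, for $K$, on the boundedness of the solution operator combined with the compact embedding of $U_0$ into $[L^2(\Omega)]^d$ (the Weber-type compactness result, which the paper cites from Monk's book rather than proving). Your sequential Cauchy argument is just an unfolded version of the paper's one-line ``bounded composed with compact is compact.''
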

\begin{proof}
	As we all know, it follows from \cite[Theorem 4.7]{MR2059447} that $U_0$ is compactly embedded in $L^2(\Omega)$. By the definition of $K$, we know that $K$ is a bounded operator from $L^2(\Omega)$ onto $U_0$. Because bounded operator compound compact operator is still compact operator, $K$ is compact operator from $U_0$ onto $U_0$. For $V_h^0$, it is bounded by Lax-Milgram Theorem, and it is also finite rank, so $K_h$ is compact operator from $V_h^0$ onto $V_h^0$. Then we complete the proof.
\end{proof}

Now we verify (A3): there exists a bounded linear operator $\mathbf{Q}_h: V\rightarrow V_h^0$ satisfying
\begin{align*}
	&\mathbf{Q}_h \mathbf{w}_h=\mathbf{w}_h,\quad \forall \mathbf{w}_h\in V_h^0,\\
	&b(\mathbf{Q}_h \mathbf{f},\mathbf{w}_h)=b(\mathbf{f},\mathbf{w}_h),\quad \forall \mathbf{f}\in V,\mathbf{w}_h\in V_h^0.
\end{align*}

Obviously, for any $\mathbf{w}_{h}\in V_{h}^0$, $\mathbf{Q}_{h}\mathbf{w}_{h}=\mathbf{w}_{h}$, and
$$b(\mathbf{f},\mathbf{w}_{h})=(\varepsilon_r \mathbf{f}_0,\mathbf{w}_0)=(\varepsilon_r \mathbf{Q}_0\mathbf{f}_0, \mathbf{w}_0)=b(\mathbf{Q}_{h}\mathbf{f},\mathbf{w}_{h}).$$
Further we need to prove $\mathbf{Q}_{h}\mathbf{f} \in V_h^0$. For any $q_h\in W_h$, it follows from (\ref{weak gradient}) that
\begin{align*}
	(\varepsilon_r \mathbf{Q}_{h}\mathbf{f}, \nabla_w q_h)&=(\varepsilon_r \mathbf{Q}_{0}\mathbf{f}, \nabla_w q_h)\\
	&=(\varepsilon_r \mathbf{f}, \nabla_w q_h)\\
	&=-(\nabla \cdot (\varepsilon_r \mathbf{f}), q_0)+\sumT \langle \varepsilon_r \mathbf{f}\cdot \mathbf{n}, q_b \rangle_{\partial T}\\
	&=0,
\end{align*}
which uses the fact that $\sum\limits_{T\in \mathcal{T}_{h}}\langle \mathbf{f}\cdot \mathbf{n}, q_b \rangle_{\partial T}=0$. Then (A3) is verified.

Denote
\begin{align*}
	&\delta_{h,\mu}=\sup\limits_{\mathbf{v}\in R(E_{\mu}(K)),\atop \Vert \mathbf{v} \Vert_{V}=1}\Vert \mathbf{v}-Q_{h}\mathbf{v} \Vert_{V},\\
	&\delta_{h,\mu}'=\sup\limits_{\mathbf{v}\in R(E_{\mu}(K)),\atop \Vert \mathbf{v} \Vert_{V}=1}\Vert \mathbf{v}-\mathbf{Q}_{h}\mathbf{v} \Vert,
\end{align*}
where $R(E_{\mu}(K))$ is the space of eigenfunctions corresponding to the eigenvalue $\mu$, and can be found in Section 2 of \cite{MR3919912}.

Let $\Pi_{0}$ be the projection operator from $V$ onto $L^2(\Omega)$ under $b(\cdot,\cdot)$, denote
\begin{align*}
	&e_{h,\mu}=\Vert (K-K_{h}Q_{h})|_{R(E_{\mu}(K)} \Vert_{V},  \\
	&e_{h,\mu}'=\Vert (\Pi_{0}K-\Pi_{0}K_{h}Q_{h})|_{R(E_{\mu}(K))} \Vert_{X}.
\end{align*}

Next we verify (A4) and (A5):
\begin{align*}
	&(A4): e_{h,\mu}\rightarrow 0 ~{\rm as} ~h \rightarrow 0, {\rm and} ~\delta_{h,\mu}\gamma(h)^{-1}\rightarrow 0~{\rm as} ~h \rightarrow 0,\\
	&(A5): e_{h,\mu}'\rightarrow 0 ~{\rm as} ~h \rightarrow 0, {\rm and} ~\delta_{h,\mu}'\gamma(h)^{-1}\rightarrow 0~{\rm as} ~h \rightarrow 0.
\end{align*}

\begin{lemma}
	\label{delta error}
	Assume $R(E_{\mu}(K))\subset H^{k+1}(\Omega)$, there hold the following estimates
	\begin{align*}
		&\delta_{h,\mu}\lesssim h^{k},\\
		&\delta_{h,\mu}'\lesssim h^{k+1}.
	\end{align*}
\end{lemma}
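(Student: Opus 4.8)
The plan is to reduce both quantities to standard $L^2$-projection approximation estimates on the smooth eigenfunctions in $R(E_\mu(K))$, and then absorb the resulting higher-order Sobolev seminorm into the unit $V$-norm by exploiting that the eigenspace $R(E_\mu(K))$ is finite-dimensional. Fix $\mathbf{v}\in R(E_\mu(K))$ and regard it as the weak function $\{\mathbf{v}|_T,\mathbf{v}|_e\}$ whose interior and boundary components coincide on interelement edges (so the jump term of $\|\cdot\|_V$ vanishes for $\mathbf{v}$ itself). Then $\mathbf{v}-\mathbf{Q}_h\mathbf{v}$ has interior part $\mathbf{v}-\mathbf{Q}_0\mathbf{v}$ and boundary part $\mathbf{v}-\mathbf{Q}_b\mathbf{v}$, and since $(\mathbf{v}-\mathbf{Q}_0\mathbf{v})-(\mathbf{v}-\mathbf{Q}_b\mathbf{v})=\mathbf{Q}_b\mathbf{v}-\mathbf{Q}_0\mathbf{v}$ on $\partial T$, the definition of the $V$-norm gives
$$
\|\mathbf{v}-\mathbf{Q}_h\mathbf{v}\|_V^2=\sumT\Big(\mu_r^{-1}\|\nabla\times(\mathbf{v}-\mathbf{Q}_0\mathbf{v})\|_T^2+h_T^{-1}\|(\mathbf{Q}_b\mathbf{v}-\mathbf{Q}_0\mathbf{v})\times\mathbf{n}\|_{\partial T}^2\Big).
$$

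For the first (curl) term I would invoke the standard approximation property of the $L^2$ projection $\mathbf{Q}_0$ onto $[P_k(T)]^d$, namely $\|\mathbf{v}-\mathbf{Q}_0\mathbf{v}\|_{m,T}\lesssim h_T^{k+1-m}|\mathbf{v}|_{k+1,T}$ for $m=0,1$. Since $\|\nabla\times(\mathbf{v}-\mathbf{Q}_0\mathbf{v})\|_T\le|\mathbf{v}-\mathbf{Q}_0\mathbf{v}|_{1,T}\lesssim h_T^k|\mathbf{v}|_{k+1,T}$, summation over $\mathcal{T}_h$ bounds this contribution by $h^{2k}|\mathbf{v}|_{k+1}^2$. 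For the edge term the key observation is that $\mathbf{Q}_0\mathbf{v}|_e\in[P_k(e)]^d$, hence $\mathbf{Q}_b(\mathbf{Q}_0\mathbf{v})=\mathbf{Q}_0\mathbf{v}$ on each edge and therefore $\mathbf{Q}_b\mathbf{v}-\mathbf{Q}_0\mathbf{v}=\mathbf{Q}_b(\mathbf{v}-\mathbf{Q}_0\mathbf{v})$. Using $L^2$-stability of $\mathbf{Q}_b$ followed by the trace inequality,
$$
\|(\mathbf{Q}_b\mathbf{v}-\mathbf{Q}_0\mathbf{v})\times\mathbf{n}\|_{\partial T}^2\le\|\mathbf{v}-\mathbf{Q}_0\mathbf{v}\|_{\partial T}^2\lesssim h_T^{-1}\|\mathbf{v}-\mathbf{Q}_0\mathbf{v}\|_T^2+h_T\|\nabla(\mathbf{v}-\mathbf{Q}_0\mathbf{v})\|_T^2,
$$
and inserting the approximation estimates again yields $h_T^{-1}\|(\mathbf{Q}_b\mathbf{v}-\mathbf{Q}_0\mathbf{v})\times\mathbf{n}\|_{\partial T}^2\lesssim h_T^{2k}|\mathbf{v}|_{k+1,T}^2$.

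Combining the two contributions gives $\|\mathbf{v}-\mathbf{Q}_h\mathbf{v}\|_V\lesssim h^k|\mathbf{v}|_{k+1}$. Because $R(E_\mu(K))$ is finite-dimensional and, by assumption, contained in $H^{k+1}(\Omega)$, the seminorm $|\cdot|_{k+1}$ is bounded there by $C\|\cdot\|_V$, so the normalization $\|\mathbf{v}\|_V=1$ yields $\delta_{h,\mu}\lesssim h^k$. For $\delta_{h,\mu}'$ only the interior $L^2$ error enters, namely $\|\mathbf{v}-\mathbf{Q}_h\mathbf{v}\|=\|\mathbf{v}-\mathbf{Q}_0\mathbf{v}\|\lesssim h^{k+1}|\mathbf{v}|_{k+1}$, and the same finite-dimensional argument gives $\delta_{h,\mu}'\lesssim h^{k+1}$.

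The elementwise approximation and trace inequalities are routine; the two places that require care are the algebraic reduction $\mathbf{Q}_b\mathbf{v}-\mathbf{Q}_0\mathbf{v}=\mathbf{Q}_b(\mathbf{v}-\mathbf{Q}_0\mathbf{v})$, which is needed to keep the edge term at the optimal order $h^k$, and the passage from $|\mathbf{v}|_{k+1}$ to the unit $V$-norm, which relies on the finite dimension of the eigenspace together with the regularity hypothesis $R(E_\mu(K))\subset H^{k+1}(\Omega)$. I expect the edge estimate to be the main technical obstacle, since a naive triangle-inequality split of $\mathbf{Q}_b\mathbf{v}-\mathbf{Q}_0\mathbf{v}$ would lose a power of $h$ and spoil the claimed rate.
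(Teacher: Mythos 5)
Your proposal is correct and follows essentially the same route as the paper: expand the $V$-norm of $\mathbf{v}-\mathbf{Q}_h\mathbf{v}$ into the elementwise curl term and the edge jump $(\mathbf{Q}_0\mathbf{v}-\mathbf{Q}_b\mathbf{v})\times\mathbf{n}$, bound the latter by $\|\mathbf{v}-\mathbf{Q}_0\mathbf{v}\|_{\partial T}$ via the $L^2$-stability of $\mathbf{Q}_b$, and conclude with the trace and projection inequalities (the paper performs exactly this chain, leaving the identity $\mathbf{Q}_b\mathbf{v}-\mathbf{Q}_0\mathbf{v}=\mathbf{Q}_b(\mathbf{v}-\mathbf{Q}_0\mathbf{v})$ implicit). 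Your added remark on absorbing $|\mathbf{v}|_{k+1}$ into the unit $V$-norm via the finite dimensionality of $R(E_\mu(K))$ is a detail the paper glosses over, but it does not change the substance of the argument.
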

\begin{proof}
	It follows from the trace inequality and the projection inequality \cite{MR3452926} that
	\begin{align*}
		&\Vert \mathbf{u}-\mathbf{Q}_{h}\mathbf{u} \Vert_{V}^{2} \\
		=&\sum\limits_{T\in \mathcal{T}_{h}}\Vert \mu_r^{-1}\nabla\times (\mathbf{u}-\mathbf{Q}_{0}\mathbf{u}) \Vert_{T}^{2}+\sum\limits_{T\in \mathcal{T}_{h}}h_{T}^{-1}\Vert (\mathbf{Q}_{0}\mathbf{u}-\mathbf{Q}_{b}\mathbf{u})\times \mathbf{n} \Vert_{\partial T}^{2}  \\
		\leq&\sum\limits_{T\in \mathcal{T}_{h}}\Vert \mu_r^{-1}\nabla\times (\mathbf{u}-\mathbf{Q}_{0}\mathbf{u}) \Vert_{T}^{2}+\sum\limits_{T\in \mathcal{T}_{h}}h_{T}^{-1}\Vert \mathbf{Q}_{0}\mathbf{u}-\mathbf{u} \Vert_{\partial T}^{2}  \\
		\leq&\sum\limits_{T\in \mathcal{T}_{h}}\Vert \mu_r^{-1}\nabla\times (\mathbf{u}-\mathbf{Q}_{0}\mathbf{u}) \Vert_{T}^{2}
		+\sum\limits_{T\in \mathcal{T}_{h}}C(h_{T}^{-2}\Vert \mathbf{Q}_{0}\mathbf{u}-\mathbf{u} \Vert_{T}^{2}+\Vert \nabla(\mathbf{Q}_{0}\mathbf{u}-\mathbf{u}) \Vert_{T}^{2})  \\
		\lesssim&h^{2k},
	\end{align*}
	and
	$$\Vert \mathbf{u}-\mathbf{Q}_{h}\mathbf{u} \Vert^{2}\lesssim h^{2(k+1)}.$$
	Then we complete the proof.
\end{proof}

For further analysis, we consider the following source problem corresponding the eigenvalue problem (\ref{eig problem}):
\begin{equation}\label{source problem}
	\left\{
	\begin{aligned}
		\nabla \times \mu_r^{-1} \nabla \times \mathbf{u} &=  \varepsilon_r \mathbf{f},&\quad &\text{in }\Omega,&\\
		\nabla \cdot \varepsilon_r \mathbf{u} &= 0,&\quad &\text{in }\Omega,&\\
		\mathbf{u}\times \mathbf{n} &=0,&\quad &\text{on }\partial\Omega,&
	\end{aligned}
	\right.
\end{equation}
where $\mathbf{f}\in H({\rm div^0}; \Omega)$.

The corresponding variational form is: Find $\mathbf{u}\in H_0({\rm curl}; \Omega) $ and $p\in H_0^1(\Omega)$ such that
\begin{equation}
	\label{weak form for source}
	\left\{
	\begin{aligned}
		(\mu_r^{-1}\nabla \times \mathbf{u},\nabla \times \mathbf{v})+(\nabla p,\varepsilon_r \mathbf{v})&=(\varepsilon_r \mathbf{f},\mathbf{v})&\quad &\forall \mathbf{v}\in H_0({\rm curl}; \Omega),&\\ 
		(\varepsilon_r \mathbf{u},\nabla q)&=0 &\quad &\forall q\in H_0^1(\Omega).&
	\end{aligned}
	\right.
\end{equation}
Then we give the WG numerical scheme for (\ref{weak form for source}): Find $\mathbf{u}_h\in V_h$ and $p_h\in W_h$ such that
\begin{equation}
	\label{numerical scheme for source}
	\left\{
	\begin{aligned}
		&a_w(\mathbf{u}_h,\mathbf{v}_h)+(\varepsilon_r \mathbf{v}_0, \nabla_w p_h)=(\varepsilon_r \mathbf{f},\mathbf{v}_0)&\quad &\forall \mathbf{v}_h\in V_h,&\\ 
		&(\varepsilon_r \mathbf{u}_0,\nabla_w q_h)+\sumT h_T\langle p_0-p_b, q_0-q_b\rangle_{\partial T}=0 &\quad &\forall q_h\in W_h.&
	\end{aligned}
	\right.
\end{equation}

\begin{lemma}\cite[Theorem 7.2]{MR3394450}
	Let $(\mathbf{u}; p)\in [H^{k+1}(\Omega)]^d\times[H^1_0(\Omega)\cap H^k(\Omega)]$ be the solution of (\ref{weak form for source}) and $(\mathbf{u}_h; p_h)\in V_h\times W_h$ be the numerical solution of (\ref{numerical scheme for source}), then
	$$\|\mathbf{Q}_h\mathbf{u}-\mathbf{u}_h\|_V\leq C\gamma(h)^{-1}h^k\|\mathbf{u}\|_{k+1}.$$
\end{lemma}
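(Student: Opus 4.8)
The plan is to carry out a standard mixed weak Galerkin consistency-plus-stability analysis, comparing $\mathbf{u}_h$ against the \emph{projected} solution $\mathbf{Q}_h\mathbf{u}$ rather than against $\mathbf{u}$ itself, so that the commutativity identities of Lemma~\ref{projection} can be exploited. Set $\mathbf{e}_h=\mathbf{Q}_h\mathbf{u}-\mathbf{u}_h\in V_h$ and $\theta_h=Q_h p-p_h\in W_h$. A useful preliminary observation is that, because $\mathbf{f}\in H({\rm div^0};\Omega)$ and $\varepsilon_r$ is constant, testing the first line of (\ref{weak form for source}) against $\mathbf{v}=\nabla\phi$ with $\phi\in H_0^1(\Omega)$ kills the curl term and forces the exact multiplier to vanish, $p\equiv 0$; hence $Q_h p=0$ and $\theta_h=-p_h$. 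This is consistent with the fact that the target bound involves only $\|\mathbf{u}\|_{k+1}$.

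First I would derive the two error equations. Inserting $\mathbf{Q}_h\mathbf{u}$ into $a_w(\cdot,\cdot)$, using $\nabla_w\times\mathbf{Q}_h\mathbf{u}=\mathbb{Q}_h(\nabla\times\mathbf{u})$ from Lemma~\ref{projection}, rewriting the discrete form through the weak-curl definition (\ref{weak curl}) and integrating by parts against the strong equation $\nabla\times\mu_r^{-1}\nabla\times\mathbf{u}=\varepsilon_r\mathbf{f}$, I would obtain
\begin{equation*}
a_w(\mathbf{e}_h,\mathbf{v}_h)+(\varepsilon_r\mathbf{v}_0,\nabla_w\theta_h)=\zeta(\mathbf{u};\mathbf{v}_h),\qquad\forall\,\mathbf{v}_h\in V_h,
\end{equation*}
where $\zeta$ collects the consistency residuals, namely the boundary mismatch terms produced by $\mathbb{Q}_h$ and $\mathbf{Q}_0$ together with the stabilization $s(\mathbf{Q}_h\mathbf{u},\mathbf{v}_h)$. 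For the second equation, since $\nabla\cdot\varepsilon_r\mathbf{u}=0$ the computation already performed in the verification of (A3) gives $(\varepsilon_r\mathbf{Q}_0\mathbf{u},\nabla_w q_h)=0$; subtracting the second line of (\ref{numerical scheme for source}) and using $p\equiv0$ yields the clean identity
\begin{equation*}
(\varepsilon_r\mathbf{e}_0,\nabla_w q_h)+\sumT h_T\langle\theta_0-\theta_b,q_0-q_b\rangle_{\partial T}=0,\qquad\forall\,q_h\in W_h.
\end{equation*}

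Next I would estimate $\zeta$. Each residual term is a projection error localized on $\partial T$; by the trace and inverse inequalities and the projection approximation bounds (as used in Lemma~\ref{delta error}) one gets $|\zeta(\mathbf{u};\mathbf{v}_h)|\lesssim h^k\|\mathbf{u}\|_{k+1}\,\|\mathbf{v}_h\|_V$. Taking $\mathbf{v}_h=\mathbf{e}_h$ and $q_h=\theta_h$ and subtracting the two error equations cancels the coupling term $(\varepsilon_r\mathbf{e}_0,\nabla_w\theta_h)$ and leaves
\begin{equation*}
a_w(\mathbf{e}_h,\mathbf{e}_h)=\zeta(\mathbf{u};\mathbf{e}_h)+\sumT h_T\|\theta_0-\theta_b\|^2_{\partial T}.
\end{equation*}
To close the argument I would control the discrete multiplier $\theta_h=-p_h$ through the inf-sup/stability structure of the pair $(V_h,W_h)$: choosing a suitable test field in the first error equation bounds the stabilization seminorm $\big(\sumT h_T\|\theta_0-\theta_b\|^2_{\partial T}\big)^{1/2}$ by $\|\mathbf{e}_h\|_V+h^k\|\mathbf{u}\|_{k+1}$, after which the $h_T$-weighted term can be absorbed or bounded by $h^k\|\mathbf{u}\|_{k+1}\|\mathbf{e}_h\|_V$. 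Finally, invoking the coercivity estimate $a_w(\mathbf{e}_h,\mathbf{e}_h)\geq C\gamma(h)\|\mathbf{e}_h\|_V^2$ proved above and dividing by $\|\mathbf{e}_h\|_V$ gives $\|\mathbf{e}_h\|_V\lesssim\gamma(h)^{-1}h^k\|\mathbf{u}\|_{k+1}$, which is the claim.

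The main obstacle, I expect, is the bookkeeping in the consistency identity for the weak-curl term: one must integrate by parts twice and insert $\pm\mathbf{Q}_0\mathbf{u}$ and $\pm\mathbb{Q}_h(\nabla\times\mathbf{u})$ so that every surviving face term is a genuine projection error amenable to trace/inverse estimates, the stabilization $s$ being exactly what renders these terms $\|\mathbf{v}_h\|_V$-bounded. The secondary difficulty is the control of the discrete pressure $\theta_h$, i.e. extracting enough stability from the $h_T$-weighted coupling in (\ref{numerical scheme for source}) so that the extra stabilization term on the right does not degrade the rate; this is where the saddle-point nature of the source problem, as opposed to the purely coercive eigenvalue form (\ref{weak form}), makes the estimate more delicate. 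The degradation factor $\gamma(h)^{-1}$ is unavoidable and enters solely through the $h$-dependent coercivity constant.
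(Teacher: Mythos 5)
First, note that the paper itself offers no proof of this lemma: it is quoted verbatim as \cite[Theorem 7.2]{MR3394450}, so your attempt can only be measured against the standard argument in that reference. Your overall strategy is exactly that argument: compare $\mathbf{u}_h$ with $\mathbf{Q}_h\mathbf{u}$, use the commutativity $\nabla_w\times\mathbf{Q}_h\mathbf{u}=\mathbb{Q}_h(\nabla\times\mathbf{u})$ to derive error equations whose right-hand side is a sum of face-localized projection residuals, bound those residuals by $h^k\Vert\mathbf{u}\Vert_{k+1}\Vert\mathbf{v}_h\Vert_V$ via trace and approximation estimates, and divide by the $\gamma(h)$-coercivity of $a_w$. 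Your preliminary observation that $\mathbf{f}\in H({\rm div}^0;\Omega)$ forces $p\equiv 0$ (hence $\theta_h=-p_h$) is correct and simplifies the second error equation exactly as you say.

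The gap is in the closing step. With the scheme as written in (\ref{numerical scheme for source}), taking $\mathbf{v}_h=\mathbf{e}_h$, $q_h=\theta_h$ and subtracting leaves, as you correctly compute,
\begin{equation*}
a_w(\mathbf{e}_h,\mathbf{e}_h)=\zeta(\mathbf{u};\mathbf{e}_h)+\sum_{T\in\mathcal{T}_h}h_T\Vert\theta_0-\theta_b\Vert^2_{\partial T},
\end{equation*}
i.e.\ the pressure-stabilization term sits on the \emph{right} with a plus sign. Your proposed remedy --- an inf-sup bound of the form $\bigl(\sum_T h_T\Vert\theta_0-\theta_b\Vert^2_{\partial T}\bigr)^{1/2}\lesssim\Vert\mathbf{e}_h\Vert_V+h^k\Vert\mathbf{u}\Vert_{k+1}$ --- does not close the argument: squaring produces a term $C\Vert\mathbf{e}_h\Vert_V^2$ with an $h$-independent constant, which cannot be absorbed into the left-hand side because the coercivity of $a_w$ only yields $\gamma(h)\Vert\mathbf{e}_h\Vert_V^2$ with $\gamma(h)\to 0$. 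The reference avoids this entirely: with the consistent sign convention (the pressure stabilization entering the second equation with the opposite sign to the coupling, equivalently $B\mathbf{u}_h-c(p_h,q_h)=0$), the subtraction gives $a_w(\mathbf{e}_h,\mathbf{e}_h)+\sum_T h_T\Vert\theta_0-\theta_b\Vert^2_{\partial T}=\zeta(\mathbf{u};\mathbf{e}_h)$, both left-hand terms are nonnegative, the stabilization term is simply dropped, and no inf-sup argument is needed at all. So either you must flip the sign in the second error equation (which is almost certainly the intended form of the scheme) or supply a genuinely different mechanism for the multiplier term; as written, the final absorption step fails. A minor additional remark: the residuals in $\zeta$ are controlled by $\Vert\mathbf{v}_h\Vert_V$ because the $V$-seminorm already contains the unweighted tangential jumps $h_T^{-1}\Vert(\mathbf{v}_0-\mathbf{v}_b)\times\mathbf{n}\Vert^2_{\partial T}$; the stabilization $s$ (which carries the extra factor $\gamma(h)$) is what makes $a_w$ coercive, not what bounds $\zeta$.
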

\begin{lemma}\cite[Theorem 8.1]{MR3394450}
	Let $(\mathbf{u}; p)\in [H^{k+1}(\Omega)]^d\times[H^1_0(\Omega)\cap H^k(\Omega)]$ be the solution of (\ref{weak form for source}) and $(\mathbf{u}_h; p_h)\in V_h\times W_h$ be the numerical solution of (\ref{numerical scheme for source}), then
	$$\|\mathbf{Q}_0\mathbf{u}-\mathbf{u}_0\|_V\leq C\gamma(h)^{-1}h^{k+1}\|\mathbf{u}\|_{k+1}.$$
\end{lemma}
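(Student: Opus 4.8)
Since the preceding lemma already supplies the energy-norm bound of order $h^{k}$, the present statement gains a full power of $h$, which is the hallmark of an $L^2$-type (Aubin--Nitsche) duality estimate; accordingly the plan is to prove it by a duality argument. Write $\mathbf{e}_0=\mathbf{Q}_0\mathbf{u}-\mathbf{u}_0$ for the interior error. I would introduce the dual source problem whose data is $\mathbf{e}_0$: find $(\boldsymbol{\Phi};\xi)\in H_0({\rm curl};\Omega)\times H_0^1(\Omega)$ solving (\ref{weak form for source}) with $\mathbf{e}_0$ in place of $\mathbf{f}$. Because $a(\cdot,\cdot)$ and $b(\cdot,\cdot)$ are symmetric, the dual problem has the same structure as the primal one, and I would invoke the elliptic regularity $\|\boldsymbol{\Phi}\|_{2}\lesssim\|\mathbf{e}_0\|$ (available on convex or smooth $\Omega$, consistent with the regularity already assumed for $R(E_\mu(K))$).

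The next step is to express $\|\mathbf{e}_0\|^2$ through the dual solution. Testing the dual variational form against the error and passing to the WG forms gives $\|\mathbf{e}_0\|^2\sim b(\mathbf{e}_0,\mathbf{e}_0)=a_w(\mathbf{Q}_h\boldsymbol{\Phi},\,\mathbf{Q}_h\mathbf{u}-\mathbf{u}_h)+(\text{l.o.t.})$, after which I would use the Galerkin orthogonality (the error equations satisfied by the WG scheme (\ref{numerical scheme for source})) to rewrite the right-hand side as a pairing of the primal discrete error $\mathbf{Q}_h\mathbf{u}-\mathbf{u}_h$ against the dual \emph{projection} error $\boldsymbol{\Phi}-\mathbf{Q}_h\boldsymbol{\Phi}$. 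Here Lemma \ref{projection} (the commutativity $\nabla_w\mathbf{Q}_h=\mathbb{Q}_h(\nabla\times\cdot)$) is what lets me move the projection through the weak curl, and the (A3)-type identity $b(\mathbf{Q}_h\mathbf{f},\mathbf{w}_h)=b(\mathbf{f},\mathbf{w}_h)$ verified above eliminates the mixed pressure contributions.

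Finally I would estimate the two factors. For the primal factor I invoke the preceding lemma, $\|\mathbf{Q}_h\mathbf{u}-\mathbf{u}_h\|_V\lesssim\gamma(h)^{-1}h^{k}\|\mathbf{u}\|_{k+1}$. For the dual factor I only need the projection error, which by the computation in Lemma \ref{delta error} is $\|\boldsymbol{\Phi}-\mathbf{Q}_h\boldsymbol{\Phi}\|_V\lesssim h\|\boldsymbol{\Phi}\|_2\lesssim h\|\mathbf{e}_0\|$ and carries \emph{no} factor $\gamma(h)^{-1}$. Combining, $\|\mathbf{e}_0\|^2\lesssim\gamma(h)^{-1}h^{k+1}\|\mathbf{u}\|_{k+1}\,\|\mathbf{e}_0\|$, and dividing by $\|\mathbf{e}_0\|$ gives the claim.

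The main obstacle is precisely the bookkeeping of the $\gamma(h)$ powers together with the stabilizer $s(\cdot,\cdot)$. The coercivity of $a_w$ holds only with the degenerate constant $\gamma(h)$, so I must arrange the duality pairing so that the single unavoidable $\gamma(h)^{-1}$ enters through the primal discrete error alone, while the dual side is handled purely by interpolation/projection estimates; the stabilization terms must be split by Young's inequality and controlled by the trace and inverse inequalities so that they contribute only the interpolation order. Mishandling $s(\cdot,\cdot)$ is exactly what would spuriously produce $\gamma(h)^{-2}$ instead of the asserted single power.
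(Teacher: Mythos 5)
The paper does not prove this lemma at all: it is imported verbatim as \cite[Theorem 8.1]{MR3394450}, so there is no in-paper argument to compare against. Your instinct is nevertheless right --- the proof in the cited source is an Aubin--Nitsche duality argument of exactly the shape you sketch, and your bookkeeping of the single power of $\gamma(h)^{-1}$ (coming only through the primal energy error, with the dual side handled by pure projection estimates and the stabilizer split so that its built-in factor $\gamma(h)$ cancels the $\gamma(h)^{-1/2}$ from the primal side) is consistent with the stated rate. Note also that the norm in the statement is written $\|\cdot\|_V$ but, as the subsequent triangle inequality $\|\mathbf{u}-\mathbf{u}_h\|\lesssim\gamma(h)^{-1}h^{k+1}$ makes clear, it is the $L^2$ norm that is meant; you read it correctly.

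The one place where your sketch papers over the real difficulty is the choice of dual data. The source problem (\ref{source problem}) is posed for $\mathbf{f}\in H({\rm div}^0;\Omega)$, but $\mathbf{e}_0=\mathbf{Q}_0\mathbf{u}-\mathbf{u}_0$ is a discontinuous piecewise polynomial that is neither divergence-free nor normally continuous, so you cannot simply ``solve (\ref{weak form for source}) with $\mathbf{e}_0$ in place of $\mathbf{f}$'' and expect $b(\mathbf{e}_0,\mathbf{e}_0)$ to be recovered from the curl--curl part alone. One must either Helmholtz-decompose $\mathbf{e}_0$ and show that its gradient part is controlled by the second (divergence) equation of the discrete scheme (\ref{numerical scheme for source}) together with the stabilization $\sum_T h_T\langle p_0-p_b,q_0-q_b\rangle_{\partial T}$, or carry the dual multiplier $\xi$ explicitly through the error equations; this is precisely the content hidden in your ``$+(\text{l.o.t.})$'' and in the claim that the (A3)-type identity ``eliminates the mixed pressure contributions'' (it does not by itself --- the commutativity $\nabla_w Q_h q=\mathbf{Q}_0(\nabla q)$ and the discrete divergence equation are what do that work). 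You also need, and correctly flag, an elliptic regularity hypothesis $\|\boldsymbol{\Phi}\|_2+\|\nabla\times\boldsymbol{\Phi}\|_1\lesssim\|\mathbf{e}_0\|$ that the lemma as quoted in this paper does not state but that the cited theorem assumes. With those two points made precise, your argument is the standard and correct route to the result.
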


By triangle inequality, we have
\begin{align*}
	&\|\mathbf{u}-\mathbf{u}_h\|_V \lesssim \gamma(h)^{-1}h^k,\\
	&\|\mathbf{u}-\mathbf{u}_h\| \lesssim \gamma(h)^{-1}h^{k+1}.
\end{align*}

Notice that $e_{h,\mu}$ and $e_{h,\mu}'$ are the $\|\mathbf{u}-\mathbf{u}_h\|_V$ and $\|\mathbf{u}-\mathbf{u}_h\|$ error estimates of the source problem (\ref{source problem}), respectively. Then the following lemma holds.
\begin{lemma}
	\label{e error}
	Assume $R(E_{\mu}(K))\subset H^{k+1}(\Omega)$, then
	\begin{align*}
		&e_{h,\mu}\lesssim \gamma(h)^{-1}h^{k}, \\
		&e_{h,\mu}'\lesssim \gamma(h)^{-1}h^{k+1}.
	\end{align*}
\end{lemma}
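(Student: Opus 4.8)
The plan is to recognize the two operator quantities $e_{h,\mu}$ and $e_{h,\mu}'$ as nothing more than the $\|\cdot\|_V$- and $L^2$-errors of the weak Galerkin discretization of the source problem (\ref{source problem}), and then to read off the bounds directly from the estimates $\|\mathbf{u}-\mathbf{u}_h\|_V\lesssim\gamma(h)^{-1}h^k$ and $\|\mathbf{u}-\mathbf{u}_h\|\lesssim\gamma(h)^{-1}h^{k+1}$ already assembled from \cite[Theorem 7.2]{MR3394450} and \cite[Theorem 8.1]{MR3394450}. Since $\mu$ is an eigenvalue of the compact operator $K$, the space $R(E_\mu(K))$ is finite-dimensional, so the operator norm restricted to it is attained; it therefore suffices to bound $\|(K-K_h\mathbf{Q}_h)\mathbf{f}\|_V$ and $\|\Pi_0(K-K_h\mathbf{Q}_h)\mathbf{f}\|$ for a fixed $\mathbf{f}\in R(E_\mu(K))$ and to absorb the resulting factor $\|\mathbf{f}\|_{k+1}$ (finite by the standing hypothesis $R(E_\mu(K))\subset H^{k+1}(\Omega)$, and bounded on the unit ball by finite dimensionality) into the generic constant.

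The first step is the identification of the two pieces of the error with the exact and discrete source solutions. By definition of $K$, the function $\mathbf{u}:=K\mathbf{f}$ is the exact solution of (\ref{weak form for source}) with right-hand side $\mathbf{f}$; since $\mathbf{f}$ is an eigenfunction it lies in $U_0\subset H({\rm div}^0;\Omega)$ and, by assumption, in $H^{k+1}(\Omega)$, so the regularity hypotheses of the two cited theorems are satisfied. Next I would argue that $\mathbf{u}_h:=K_h\mathbf{Q}_h\mathbf{f}$ carries exactly the same data as the mixed scheme (\ref{numerical scheme for source}): indeed $b_w(\mathbf{Q}_h\mathbf{f},\mathbf{v}_h)=(\varepsilon_r\mathbf{Q}_0\mathbf{f},\mathbf{v}_0)=(\varepsilon_r\mathbf{f},\mathbf{v}_0)$ because $\mathbf{v}_0\in[P_k(T)]^d$, $\mathbf{Q}_0$ is the $L^2$-projection onto that space, and $\varepsilon_r$ is constant, so the reduced problem on $V_h^0$ inherits precisely the forcing of (\ref{numerical scheme for source}).

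With the identification in hand the conclusion is immediate. For the energy norm, $(K-K_h\mathbf{Q}_h)\mathbf{f}=\mathbf{u}-\mathbf{u}_h$, and $\|\mathbf{u}-\mathbf{u}_h\|_V\lesssim\gamma(h)^{-1}h^k\|\mathbf{u}\|_{k+1}$ yields $e_{h,\mu}\lesssim\gamma(h)^{-1}h^k$. For the $L^2$ quantity, $\Pi_0$ extracts the interior component, so $\Pi_0K\mathbf{f}=\mathbf{u}$ and $\Pi_0K_h\mathbf{Q}_h\mathbf{f}=\mathbf{u}_0$, whence $e_{h,\mu}'=\|\mathbf{u}-\mathbf{u}_0\|\lesssim\gamma(h)^{-1}h^{k+1}$. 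Taking the supremum over the unit ball of $R(E_\mu(K))$ completes both estimates.

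The hard part will be the identification step, namely reconciling the reduced solution operator $K_h$, which lives on the constrained space $V_h^0$, with the mixed scheme (\ref{numerical scheme for source}) posed on the full pair $V_h\times W_h$. The point to verify is that for divergence-free data the discrete multiplier $p_h$ vanishes and the velocity is discretely divergence-free, i.e.\ $\mathbf{u}_h\in V_h^0$; when $p_h=0$ the stabilization $\sumT h_T\langle p_0-p_b,q_0-q_b\rangle$ disappears and the constraint equation of (\ref{numerical scheme for source}) reduces to $(\varepsilon_r\mathbf{u}_0,\nabla_wq_h)=0$. I expect this to follow from the discrete Helmholtz-type splitting of $V_h$ into $V_h^0$ and the range of $\nabla_w$ on $W_h$, together with the commutativity relation $\nabla_w\mathbf{Q}_h\mathbf{v}=\mathbb{Q}_h(\nabla\times\mathbf{v})$ of Lemma \ref{projection}, which forces the curl-curl form $a_w$ to annihilate discrete gradients, and the orthogonality $(\varepsilon_r\mathbf{f},\nabla_wq_h)=0$ already established in the verification of (A3). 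Should the stabilization prevent an exact coincidence, the residual $\|K_h\mathbf{Q}_h\mathbf{f}-\mathbf{u}_h\|_V$ would have to be shown higher order and folded into the stated bound.
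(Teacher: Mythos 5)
Your proposal is correct and follows essentially the same route as the paper: the paper likewise treats $e_{h,\mu}$ and $e_{h,\mu}'$ as the $\Vert\cdot\Vert_V$- and $L^2$-errors of the WG discretization of the source problem (\ref{source problem}) and reads the bounds off \cite[Theorems 7.2 and 8.1]{MR3394450} together with the triangle inequality and Lemma \ref{delta error}. If anything, you are more careful than the paper, which simply asserts the identification of $K_h\mathbf{Q}_h\mathbf{f}$ with the solution of the mixed scheme (\ref{numerical scheme for source}) without addressing the reconciliation between the reduced operator on $V_h^0$ and the full pair $V_h\times W_h$ that you rightly flag as the delicate step.
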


According to \cite[Theorem 2.1, Theorem 2.2]{MR3919912}, the following theorem holds. The details can be seen in \cite[Theorem 7.1]{MR1115240}.

\begin{theorem}
	\label{eigfunction error}
	Let $\lambda$ be an eigenvalue of $(\ref{weak form})$ with multiplicity $m$, and $\{\mathbf{u}_{j}\}_{j=1}^{m}$ denote a basis of the corresponding m-dimensional eigenspace $R(E_{\mu}(K))\subset H^{k+1}(\Omega)$. Similarly, let $\{\lambda_{j,h}\}_{j=1}^{m}$ be the eigenvalues of $(\ref{WG-scheme})$, and $\{\mathbf{u}_{j,h}\}_{j=1}^{m}$ denote a basis of the corresponding eigenspace $R(E_{\mu,h}(K_{h}))$. For any $j=1,\cdots,m$, there exists an eigenfunction $\mathbf{u}_{j}\in R(E_{\mu}(K))$ such that
	\begin{align*}
		&\Vert \mathbf{u}_{j}-\mathbf{u}_{j,h} \Vert_{V}\lesssim \gamma(h)^{-1}h^{k},\\
		&\Vert \mathbf{u}_{j}-\mathbf{u}_{j,h} \Vert\lesssim \gamma(h)^{-1}h^{k+1}.
	\end{align*}
\end{theorem}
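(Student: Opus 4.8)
The plan is to invoke the abstract spectral-approximation framework of \cite{MR3919912}, which yields eigenfunction error estimates directly once the seven assumptions (A1)--(A7) have been verified. Since the preceding development has already established (A1)--(A5) explicitly, and (A6)--(A7) are routine regularity/consistency conditions in that framework, the theorem follows by specializing \cite[Theorem 2.1, Theorem 2.2]{MR3919912} to the present setting and tracking how the rates $\delta_{h,\mu}$, $\delta_{h,\mu}'$, $e_{h,\mu}$, $e_{h,\mu}'$ enter the final bounds. Concretely, the framework guarantees that for each discrete eigenfunction $\mathbf{u}_{j,h}\in R(E_{\mu,h}(K_h))$ there is a true eigenfunction $\mathbf{u}_j\in R(E_\mu(K))$ with $\Vert \mathbf{u}_j-\mathbf{u}_{j,h}\Vert_V$ controlled by a combination of $e_{h,\mu}$ and $\delta_{h,\mu}$, and $\Vert \mathbf{u}_j-\mathbf{u}_{j,h}\Vert$ controlled by $e_{h,\mu}'$ and $\delta_{h,\mu}'$.

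First I would recall the operator-theoretic setup: the solution operators $K$ and $K_h$ were shown to be compact (A2), and $K_h$ converges to $K$ in the appropriate sense through the quantities $e_{h,\mu}\to 0$ and $e_{h,\mu}'\to 0$ established in Lemma \ref{e error}, together with the projection approximation rates $\delta_{h,\mu}\lesssim h^k$ and $\delta_{h,\mu}'\lesssim h^{k+1}$ from Lemma \ref{delta error}. The hypothesis $R(E_\mu(K))\subset H^{k+1}(\Omega)$ is exactly the regularity assumption needed to apply both Lemma \ref{delta error} and Lemma \ref{e error}, so all four rates are available. The theorem of \cite{MR3919912} then asserts that the $V$-norm eigenfunction error is bounded, up to a constant depending only on the eigenvalue $\lambda$ and its gap to the rest of the spectrum, by $e_{h,\mu}+\delta_{h,\mu}$, while the $L^2$-type error is bounded by $e_{h,\mu}'+\delta_{h,\mu}'$.

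Next I would substitute the rates. Since $e_{h,\mu}\lesssim \gamma(h)^{-1}h^k$ dominates $\delta_{h,\mu}\lesssim h^k$ (because $\gamma(h)^{-1}\to\infty$), the $V$-norm bound collapses to $\Vert \mathbf{u}_j-\mathbf{u}_{j,h}\Vert_V\lesssim \gamma(h)^{-1}h^k$. Likewise $e_{h,\mu}'\lesssim \gamma(h)^{-1}h^{k+1}$ dominates $\delta_{h,\mu}'\lesssim h^{k+1}$, giving $\Vert \mathbf{u}_j-\mathbf{u}_{j,h}\Vert\lesssim \gamma(h)^{-1}h^{k+1}$. This matches the two claimed estimates exactly. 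I would also note, following the remark in the excerpt that points to \cite[Theorem 7.1]{MR1115240}, that the abstract result simultaneously produces a basis correspondence for the full $m$-dimensional eigenspace, so the statement ``for any $j$ there exists $\mathbf{u}_j$'' is an immediate consequence of the spectral-projection estimates rather than requiring a separate matching argument.

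The main obstacle, and essentially the only nontrivial point, is confirming that the two hypotheses remaining in \cite{MR3919912} beyond (A1)--(A5)---namely the compatibility/consistency conditions (A6)--(A7)---are genuinely satisfied here, since the excerpt verifies only (A1)--(A5) in detail. I would check that (A6) (typically a uniform boundedness or adjoint-consistency condition on the discrete operator) follows from the coercivity lemma and the well-posedness of $K_h$ already proved, and that (A7) (typically the convergence $\Vert (K-K_h Q_h)|_{R(E_\mu(K))}\Vert\to 0$ paired with the projection identity $b(\mathbf{Q}_h\mathbf{f},\mathbf{w}_h)=b(\mathbf{f},\mathbf{w}_h)$ from (A3)) follows from Lemma \ref{e error} combined with (A3). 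Once these are in place, the remainder is a direct quotation of the framework's conclusion with the rates inserted, so no further estimation is needed.
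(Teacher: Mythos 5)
Your proposal matches the paper's own treatment: the paper likewise proves this theorem simply by invoking the abstract framework of \cite[Theorems 2.1 and 2.2]{MR3919912} (with details deferred to \cite[Theorem 7.1]{MR1115240}) and substituting the rates $\delta_{h,\mu}\lesssim h^{k}$, $\delta_{h,\mu}'\lesssim h^{k+1}$, $e_{h,\mu}\lesssim \gamma(h)^{-1}h^{k}$, $e_{h,\mu}'\lesssim \gamma(h)^{-1}h^{k+1}$ from Lemmas \ref{delta error} and \ref{e error}. Your only inaccuracy is the characterization of (A6) and (A7): in this paper those are the consistency quantity $\varepsilon_{h,\mathbf{u}}\to 0$ and the lower-bound inequality $\varepsilon_{h,\mathbf{u}}\ge\lambda_h\|\mathbf{u}-\mathbf{u}_h\|^2$, which are needed only for the later eigenvalue-error and lower-bound theorems, not for the eigenfunction estimates here.
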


Notice that the parameter $\gamma(h)$ is chosen as either $\gamma(h)=h^{\varepsilon} (0< \eps<1)$, or $\gamma(h)=-1/\log(h)$, then the conclusions in Theorem $\ref{eigfunction error}$ can be written as follows
\begin{align*}
	&\Vert \mathbf{u}_{j}-\mathbf{u}_{j,h} \Vert_{V}\lesssim h^{k-\varepsilon},\\
	&\Vert \mathbf{u}_{j}-\mathbf{u}_{j,h} \Vert\lesssim h^{k+1-\varepsilon},
\end{align*}
or
\begin{align*}
	&\Vert \mathbf{u}_{j}-\mathbf{u}_{j,h} \Vert_{V}\lesssim -\log(h)h^{k},\\
	&\Vert \mathbf{u}_{j}-\mathbf{u}_{j,h} \Vert\lesssim -\log(h)h^{k+1}.
\end{align*}

Let $\varepsilon_{h,\mathbf{u}}=a(\mathbf{u},\mathbf{u})-a_{w}(\mathbf{Q}_{h}\mathbf{u},\mathbf{Q}_{h}\mathbf{u})$. Then we verify (A6): $\forall \mu\in\sigma_K$, $\forall \mathbf{u}\in R(E_{\mu}(K))$, $\varepsilon_{h,\mu}\rightarrow0$ as $h\rightarrow0$, where $\sigma_K$ is the spectrum of $K$. 

\begin{lemma}
	\label{eps error}
	Let $\mathbf{u}\in H^{k+1}(\Omega)$, there holds
	$$\vert \varepsilon_{h,\mathbf{u}} \vert \lesssim h^{2k}.$$
\end{lemma}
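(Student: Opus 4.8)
The plan is to decompose $\varepsilon_{h,\mathbf{u}}$ into a weak-curl energy difference and the stabilization term, and to bound each contribution by $h^{2k}$ separately. Starting from the definition of $a_w$, one writes
\[
\varepsilon_{h,\mathbf{u}} = \Big(a(\mathbf{u},\mathbf{u}) - \sumT (\mu_r^{-1}\nabla_w \times \mathbf{Q}_h\mathbf{u}, \nabla_w \times \mathbf{Q}_h\mathbf{u})_T\Big) - s(\mathbf{Q}_h\mathbf{u}, \mathbf{Q}_h\mathbf{u}),
\]
and I would estimate the two pieces in turn.

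First I would treat the weak-curl difference using the commutativity property of Lemma~\ref{projection}, which gives $\nabla_w \times \mathbf{Q}_h\mathbf{u} = \mathbb{Q}_h(\nabla \times \mathbf{u})$ on each element. Because $\mathbb{Q}_h$ is an $L^2$-orthogonal projection, the Pythagorean identity yields, on each $T$,
\[
\|\nabla \times \mathbf{u}\|_T^2 - \|\mathbb{Q}_h(\nabla \times \mathbf{u})\|_T^2 = \|\nabla \times \mathbf{u} - \mathbb{Q}_h(\nabla \times \mathbf{u})\|_T^2,
\]
so the first bracket collapses to $\sumT \mu_r^{-1}\|\nabla \times \mathbf{u} - \mathbb{Q}_h(\nabla \times \mathbf{u})\|_T^2$. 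Since $\mathbf{u}\in H^{k+1}(\Omega)$ forces $\nabla \times \mathbf{u}\in H^k(\Omega)$, the standard $L^2$-projection approximation bound gives $\|\nabla\times\mathbf{u} - \mathbb{Q}_h(\nabla\times\mathbf{u})\|_T \lesssim h_T^{k}|\mathbf{u}|_{k+1,T}$, and summing over the elements bounds this contribution by $h^{2k}$.

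For the stabilization term I would split $\mathbf{Q}_0\mathbf{u} - \mathbf{Q}_b\mathbf{u} = (\mathbf{Q}_0\mathbf{u} - \mathbf{u}) + (\mathbf{u} - \mathbf{Q}_b\mathbf{u})$ on $\partial T$ and exploit that $\mathbf{Q}_b$ is the best $L^2$-approximation on each edge, so that $\|\mathbf{u} - \mathbf{Q}_b\mathbf{u}\|_{\partial T}\leq \|\mathbf{u} - \mathbf{Q}_0\mathbf{u}\|_{\partial T}$ and hence $\|(\mathbf{Q}_0\mathbf{u}-\mathbf{Q}_b\mathbf{u})\times\mathbf{n}\|_{\partial T}\lesssim \|\mathbf{u}-\mathbf{Q}_0\mathbf{u}\|_{\partial T}$. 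Combining the trace inequality with the projection estimate then gives $\|\mathbf{u}-\mathbf{Q}_0\mathbf{u}\|_{\partial T}^2 \lesssim h_T^{2k+1}|\mathbf{u}|_{k+1,T}^2$, whence $\sumT h_T^{-1}\|(\mathbf{Q}_0\mathbf{u}-\mathbf{Q}_b\mathbf{u})\times\mathbf{n}\|_{\partial T}^2\lesssim h^{2k}$. Multiplying by $\gamma(h)$ and using that $\gamma(h)\leq 1$ for small $h$ (for both $\gamma(h)=h^{\varepsilon}$ and $\gamma(h)=-1/\log h$) shows $s(\mathbf{Q}_h\mathbf{u},\mathbf{Q}_h\mathbf{u})\lesssim h^{2k}$ as well.

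Combining the two bounds through the triangle inequality produces $|\varepsilon_{h,\mathbf{u}}|\lesssim h^{2k}$. The decisive ingredient is the commutativity identity of Lemma~\ref{projection}, which turns the difference of the exact and weak-curl energies into a single projection error; the remaining work is routine approximation theory. The step that demands the most care is the stabilization estimate, where one must use the optimality of $\mathbf{Q}_b$ to avoid losing a power of $h$ and must confirm that the factor $\gamma(h)$ stays bounded so that the weighted boundary term does not spoil the $h^{2k}$ rate.
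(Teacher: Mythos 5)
Your proposal is correct and follows essentially the same route as the paper: both split $\varepsilon_{h,\mathbf{u}}$ into the curl-energy difference and the stabilization term, use the commutativity identity $\nabla_w\times\mathbf{Q}_h\mathbf{u}=\mathbb{Q}_h(\nabla\times\mathbf{u})$ together with the Pythagorean property of the orthogonal projection to reduce the first piece to a single projection error of order $h^{2k}$, and then control the stabilizer by reducing $\|(\mathbf{Q}_0\mathbf{u}-\mathbf{Q}_b\mathbf{u})\times\mathbf{n}\|_{\partial T}$ to $\|\mathbf{u}-\mathbf{Q}_0\mathbf{u}\|_{\partial T}$ and applying the trace and projection inequalities. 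The only cosmetic difference is that you pass through the triangle inequality and the best-approximation property of $\mathbf{Q}_b$ where the paper bounds the edge term directly, which changes nothing in substance.
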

\begin{proof}
	It follows from Lemma \ref{projection} and projection inequalities \cite[Lemma 6.1]{MR3394450} that
	\begin{align*}
		&\vert \varepsilon_{h,\mathbf{u}} \vert  \\
		=&\left\vert \sum\limits_{T\in \mathcal{T}_{h}}\Vert \nabla \times \mathbf{u} \Vert_{T}^{2}-\sum\limits_{T\in \mathcal{T}_{h}}\Vert \nabla_{w}\times \mathbf{Q}_{h}\mathbf{u} \Vert_{T}^{2}-s(\mathbf{Q}_{h}\mathbf{u},\mathbf{Q}_{h}\mathbf{u}) \right\vert   \\         
		=&\left\vert \sum\limits_{T\in \mathcal{T}_{h}}\Vert \nabla \times \mathbf{u} \Vert_{T}^{2}-\sum\limits_{T\in \mathcal{T}_{h}}\Vert \mathbb{Q}_{h}\nabla \times \mathbf{u} \Vert_{T}^{2}-s(\mathbf{Q}_{h}\mathbf{u},\mathbf{Q}_{h}\mathbf{u}) \right\vert    \\       
		\leq&\sum\limits_{T\in \mathcal{T}_{h}}\Vert \nabla \times \mathbf{u}-\mathbb{Q}_{h}\nabla \times \mathbf{u} \Vert_{T}^{2}+\sum\limits_{T\in \mathcal{T}_{h}}\gamma(h)h_{T}^{-1}\Vert \mathbf{Q}_{0}\mathbf{u}-\mathbf{u} \Vert_{\partial T}^{2}      \\    
		\leq&\sum\limits_{T\in \mathcal{T}_{h}}\Vert \nabla \times \mathbf{u}-\mathbb{Q}_{h}\nabla \times \mathbf{u} \Vert_{T}^{2}
		+\sum\limits_{T\in \mathcal{T}_{h}}C\gamma(h)(h_{T}^{-2}\Vert \mathbf{Q}_{0}\mathbf{u}-\mathbf{u} \Vert_{T}^{2}+\Vert \nabla(\mathbf{Q}_{0}\mathbf{u}-\mathbf{u}) \Vert_{T}^{2})    \\      
		\lesssim& h^{2k},
	\end{align*}
	which completes the proof of this lemma.
\end{proof}

Then the following error estimate of eigenvalues holds.
\begin{theorem}
	\label{eigenvalue error}
	Let $\lambda$ be an eigenvalue of $(\ref{weak form})$ with multiplicity $m$, and $\{\mathbf{u}_{j}\}_{j=1}^{m}$ be a basis of the corresponding m-dimensional eigenspace $R(E_{\mu}(K))\subset H^{k+1}(\Omega)$. Let $\{\lambda_{j,h}\}_{j=1}^{m}$ denote the eigenvalues of $(\ref{WG-scheme})$, and $\{\mathbf{u}_{j,h}\}_{j=1}^{m}$ denote a basis of the corresponding eigenspace $R(E_{\mu,h}(K_{h}))$. When $h$ is sufficiently small, for any $j=1,\cdots,m$, the following estimate holds
	$$\vert \lambda-\lambda_{j,h} \vert \lesssim \gamma(h)^{-2}h^{2k}.$$
\end{theorem}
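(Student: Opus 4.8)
The plan is to invoke the abstract spectral approximation theory of \cite{MR3919912} (cf.\ also \cite[Theorem 7.1]{MR1115240}), since assumptions (A1)--(A7) have already been verified for the present WG discretization. The central object is the Rayleigh quotient: normalizing so that $b(\mathbf{u}_j,\mathbf{u}_j)=1$ and $b_w(\mathbf{u}_{j,h},\mathbf{u}_{j,h})=1$, the exact and discrete eigenvalues are $\lambda=a(\mathbf{u}_j,\mathbf{u}_j)$ and $\lambda_{j,h}=a_w(\mathbf{u}_{j,h},\mathbf{u}_{j,h})$. Subtracting and inserting the projection $\mathbf{Q}_h\mathbf{u}_j$ should produce a Babu\v{s}ka--Osborn type identity of the form
\begin{equation*}
\lambda_{j,h}-\lambda = \frac{a_w(\mathbf{u}_{j,h}-\mathbf{Q}_h\mathbf{u}_j,\,\mathbf{u}_{j,h}-\mathbf{Q}_h\mathbf{u}_j) - \lambda\, b_w(\mathbf{u}_{j,h}-\mathbf{Q}_h\mathbf{u}_j,\,\mathbf{u}_{j,h}-\mathbf{Q}_h\mathbf{u}_j)}{b_w(\mathbf{u}_{j,h},\mathbf{u}_{j,h})} + \varepsilon_{h,\mathbf{u}_j},
\end{equation*}
in which the numerator is the squared energy error of the eigenfunction and the last term is exactly the consistency defect $\varepsilon_{h,\mathbf{u}_j}=a(\mathbf{u}_j,\mathbf{u}_j)-a_w(\mathbf{Q}_h\mathbf{u}_j,\mathbf{Q}_h\mathbf{u}_j)$ introduced before Lemma \ref{eps error}.

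First I would establish this identity rigorously, using the discrete eigenvalue equation $a_w(\mathbf{u}_{j,h},\mathbf{v}_h)=\lambda_{j,h}b_w(\mathbf{u}_{j,h},\mathbf{v}_h)$ for $\mathbf{v}_h\in V_h^0$ together with property (A3), which guarantees $\mathbf{Q}_h\mathbf{u}_j\in V_h^0$ and $b(\mathbf{Q}_h\mathbf{u}_j,\mathbf{v}_h)=b(\mathbf{u}_j,\mathbf{v}_h)$. The symmetry and coercivity from (A1) then let me bound the numerator by $\|\mathbf{u}_j-\mathbf{u}_{j,h}\|_V^2 + \lambda\|\mathbf{u}_j-\mathbf{u}_{j,h}\|^2$ up to constants, after replacing $\mathbf{u}_{j,h}-\mathbf{Q}_h\mathbf{u}_j$ by $\mathbf{u}_{j,h}-\mathbf{u}_j$ modulo the projection errors $\delta_{h,\mu}$, $\delta_{h,\mu}'$. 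The denominator $b_w(\mathbf{u}_{j,h},\mathbf{u}_{j,h})$ stays bounded away from zero by the normalization and the convergence provided by (A2)--(A5), so it contributes only a harmless constant.

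Next I would insert the quantitative bounds. Theorem \ref{eigfunction error} gives $\|\mathbf{u}_j-\mathbf{u}_{j,h}\|_V\lesssim\gamma(h)^{-1}h^k$ and $\|\mathbf{u}_j-\mathbf{u}_{j,h}\|\lesssim\gamma(h)^{-1}h^{k+1}$, so the energy part of the numerator is $O(\gamma(h)^{-2}h^{2k})$ and the $L^2$ part is $O(\gamma(h)^{-2}h^{2k+2})$; Lemma \ref{eps error} gives $|\varepsilon_{h,\mathbf{u}_j}|\lesssim h^{2k}$; and Lemma \ref{delta error} controls the projection remainders by $h^{2k}$ and $h^{2k+2}$. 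Since $\gamma(h)\to0$ forces $\gamma(h)^{-2}\ge1$, all of the $h^{2k}$, $h^{2k+2}$ and $\gamma(h)^{-2}h^{2k+2}$ contributions are dominated by the single term $\gamma(h)^{-2}h^{2k}$, which yields the claimed estimate $|\lambda-\lambda_{j,h}|\lesssim\gamma(h)^{-2}h^{2k}$.

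The main obstacle is the nonconformity: because $a\neq a_w$ and $b\neq b_w$ on the sum space $V=U_0+V_h^0$, the clean conforming Rayleigh-quotient identity does not hold verbatim, and the consistency defect $\varepsilon_{h,\mathbf{u}_j}$ together with the cross terms arising from inserting $\mathbf{Q}_h$ must be tracked carefully and shown to be genuinely of lower order than $\gamma(h)^{-2}h^{2k}$. A secondary subtlety, relevant when the multiplicity $m>1$, is the correct matching between the exact eigenspace $R(E_\mu(K))$ and the discrete eigenspace $R(E_{\mu,h}(K_h))$; here the spectral-projection estimates of \cite{MR3919912} encoded in (A4)--(A7) through the quantities $e_{h,\mu}$, $e_{h,\mu}'$ are needed to guarantee that each $\mathbf{u}_{j,h}$ can be paired with a genuine eigenfunction $\mathbf{u}_j\in R(E_\mu(K))$ so that Theorem \ref{eigfunction error} applies termwise.
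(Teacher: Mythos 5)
Your proposal is correct and takes essentially the same route as the paper: the paper's proof simply invokes \cite[Theorem 2.3]{MR3919912}, which packages the Rayleigh-quotient/consistency-defect mechanism you sketch into the ready-made bound $\vert \lambda-\lambda_{j,h}\vert \leq C(\varepsilon_{h,\mathbf{u}_{j}}+e_{h,\lambda^{-1}}^{2}+e_{h,\lambda^{-1}}'^{2}+\delta_{h,\lambda^{-1}}^{2}\gamma(h)^{-2}+\delta_{h,\lambda^{-1}}'^{2}\gamma(h)^{-2})$, and then inserts Lemmas \ref{delta error}, \ref{e error} and \ref{eps error}. Your substitution of the eigenfunction errors from Theorem \ref{eigfunction error} for the source-problem quantities $e_{h,\lambda^{-1}}$, $e_{h,\lambda^{-1}}'$ changes nothing in the final rate, since both are of order $\gamma(h)^{-1}h^{k}$ and $\gamma(h)^{-1}h^{k+1}$ respectively.
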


\begin{proof}
	It follows from \cite[Theorem 2.3]{MR3919912} that
	$$\vert \lambda-\lambda_{j,h} \vert \leq C(\varepsilon_{h,\mathbf{u}_{j}}+e_{h,\lambda^{-1}}^{2}+e_{h,\lambda^{-1}}'^{2}+\delta_{h,\lambda^{-1}}^{2}\gamma(h)^{-2}+
	\delta_{h,\lambda^{-1}}'^{2}\gamma(h)^{-2}).$$
	From Lemmas $\ref{delta error}$, $\ref{e error}$ and $\ref{eps error}$, we have
	\begin{align*}
		&\delta_{h,\lambda^{-1}}\lesssim h^{k}, ~\delta_{h,\lambda^{-1}}'\lesssim h^{k+1},\\
		&e_{h,\lambda^{-1}}\lesssim \gamma(h)^{-1}h^{k}, ~e_{h,\lambda^{-1}}'\lesssim \gamma(h)^{-1}h^{k+1},\\
		&\vert \varepsilon_{h,\mathbf{u}} \vert \lesssim h^{2k},
	\end{align*}
	which implies that $$\vert \lambda-\lambda_{j,h} \vert \lesssim \gamma(h)^{-2}h^{2k}.$$
	Then we complete the proof.
\end{proof}

Notice that the parameter $\gamma(h)$ is chosen as either $\gamma(h)=h^{\varepsilon} (0< \eps<1)$, or $\gamma(h)=-1/\log(h)$, the conclusion in Theorem $\ref{eigenvalue error}$ is as follows
$$\vert \lambda-\lambda_{j,h} \vert \lesssim h^{2k-2\varepsilon},$$
or
$$\vert \lambda-\lambda_{j,h} \vert \lesssim {\log(h)}^2h^{2k}.$$

\section{Asymptotic lower bound}
In this section, we show that the numerical eigenvalues are the lower bound of the exact eigenvalues.

It only need to verify (A7): Suppose $(\lambda,\mathbf{u})$ is an eigenpair of $(\ref{weak form})$, $(\lambda_{h},\mathbf{u}_{h})$ is an eigenpair of $(\ref{WG-scheme})$, there holds $\varepsilon_{h,\mathbf{u}}\geq \lambda_{h}\Vert \mathbf{u}-\mathbf{u}_{h} \Vert^{2}.$

The following estimates are crucial, and the proof can be found in \cite[Theorem 2.1]{MR3120579}.
\begin{lemma}
	\label{projection lower bound}
	The exact eigenfunction $\mathbf{u}$ of the Steklov eigenvalue problem $(\ref{weak form})$ exhibits lower bounds for its convergence rates as follows
	\begin{align*}
	\sum\limits_{T\in \mathcal{T}_{h}}\Vert \nabla \times \mathbf{u}-\mathbb{Q}_{h}\nabla \times \mathbf{u} \Vert_{T}^{2}\gtrsim h^{2k}.
	\end{align*}
\end{lemma}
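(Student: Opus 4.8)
The plan is to pass to a sum of elementwise best-approximation errors and then isolate the degree-$k$ term of a local Taylor expansion as the obstruction to faster convergence. Since $\mathbb{Q}_h$ restricted to each $T$ is exactly the $L^2(T)$-orthogonal projection onto $[P_{k-1}(T)]^{2d-3}$ (the range of $\nabla_w\times$, consistent with Lemma \ref{projection} and with the claimed rate $h^{2k}$), one has the elementwise identity
$$\|\nabla\times\mathbf{u}-\mathbb{Q}_h\nabla\times\mathbf{u}\|_T=\inf_{\mathbf{p}\in[P_{k-1}(T)]^{2d-3}}\|\nabla\times\mathbf{u}-\mathbf{p}\|_T.$$
Writing $w=\nabla\times\mathbf{u}\in H^{k}(\Omega)$ (since $\mathbf{u}\in H^{k+1}(\Omega)$) and fixing the centroid $x_T$ of $T$, I would split $w$ on $T$ as $w=q_{k-1}^T+w_k^T+\rho^T$, where $q_{k-1}^T\in[P_{k-1}(T)]^{2d-3}$ is annihilated by the projection, $w_k^T$ is the homogeneous degree-$k$ term built from the $k$-th derivatives $D^k w(x_T)$, and $\rho^T$ is the higher-order remainder.

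First I would establish the elementwise lower bound. Because $q_{k-1}^T$ lies in the projection space, the reverse triangle inequality gives
$$\inf_{\mathbf{p}}\|w-\mathbf{p}\|_T=\inf_{\mathbf{p}'}\|w_k^T+\rho^T-\mathbf{p}'\|_T\ge\inf_{\mathbf{p}'}\|w_k^T-\mathbf{p}'\|_T-\|\rho^T\|_T,$$
so it suffices to bound $\inf_{\mathbf{p}'}\|w_k^T-\mathbf{p}'\|_T$ from below. Mapping $T$ to a reference element of unit diameter, $w_k^T$ becomes a homogeneous degree-$k$ polynomial with coefficients $\sim h_T^k D^k w(x_T)$; since the space of homogeneous degree-$k$ polynomials meets $P_{k-1}$ only in $\{0\}$, the quotient norm is a genuine norm on a finite-dimensional space, and compactness together with shape-regularity yields a constant $c>0$ independent of $T$ with $\inf_{\mathbf{p}'}\|w_k^T-\mathbf{p}'\|_T\ge c\,h_T^{k+d/2}\,|D^k w(x_T)|$. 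Scaling also gives $\|w_k^T\|_T\simeq h_T^{k+d/2}|D^kw(x_T)|$, while the averaged-Taylor (Bramble--Hilbert) estimate yields $\|\rho^T\|_T=o\!\big(h_T^{k+d/2}|D^kw(x_T)|\big)$ as $h\to0$; combining these, each element contributes at least $\gtrsim h_T^{2k+d}|D^kw(x_T)|^2\simeq h_T^{2k}|w|_{H^k(T)}^2$.

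Summing over $T\in\mathcal{T}_h$ and using $h_T\simeq h$ under quasi-uniformity, the sum of the leading contributions behaves like $h^{2k}|\nabla\times\mathbf{u}|_{H^k(\Omega)}^2$ up to a fixed positive constant, so the bound $\gtrsim h^{2k}$ follows once this seminorm is bounded away from zero. The main obstacle is exactly this non-degeneracy: the estimate fails if $\nabla\times\mathbf{u}$ is a polynomial of degree below $k$, so I must argue that $|\nabla\times\mathbf{u}|_{H^k(\Omega)}>0$. I would obtain this from the interior analyticity of eigenfunctions of the constant-coefficient Maxwell operator: for a nonzero eigenvalue $\lambda$, $\nabla\times\mathbf{u}$ is a nonzero real-analytic field, whence its $k$-th derivatives cannot vanish on a set of positive measure and $D^kw(x_T)\neq0$ on a fixed positive fraction of the elements. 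This is the delicate point; the scaling, compactness, and Bramble--Hilbert steps above are routine once it is secured.
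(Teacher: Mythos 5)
The paper does not actually prove this lemma: it is stated with a pointer to \cite[Theorem 2.1]{MR3120579} and no argument is given in the text, so there is nothing internal to compare your proof against. Your proposal is the standard route behind such results (Taylor-expand to degree $k$, kill the degree-$(k-1)$ part with the projection, extract the homogeneous degree-$k$ term by a quotient-norm/scaling argument, control the remainder), and your preliminary observation that $\mathbb{Q}_h$ must be read as the projection onto $[P_{k-1}(T)]^{2d-3}$ --- despite the paper's definition saying $[P_k(T)]^{2d-3}$ --- is correct and necessary: otherwise Lemma \ref{projection} fails and the claimed rate $h^{2k}$ would be unattainable as a lower bound for a quantity of order $h^{2(k+1)}$.

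Two steps need repair. First, the elementwise remainder claim $\|\rho^T\|_T = o\bigl(h_T^{k+d/2}|D^k w(x_T)|\bigr)$ cannot hold on elements where $D^k w(x_T)$ is small or zero, and $w=\nabla\times\mathbf{u}$ is only in $H^k$ globally, so pointwise values of $D^k w$ near the corners of $\Omega$ are not even defined. The standard fix is to restrict attention to a fixed compact subdomain $\Omega_0\Subset\Omega$ where $\mathbf{u}$ is analytic, keep the lower and upper contributions separate, and perform the cancellation only after summing: $\sum_T\inf_{\mathbf{p}}\|w-\mathbf{p}\|_T^2\ \ge\ \tfrac12\sum_{T\subset\Omega_0} c\,h_T^{2k}|w|_{k,T}^2-\sum_{T\subset\Omega_0}\|\rho^T\|_T^2$, where the first sum is $\simeq h^{2k}|w|_{k,\Omega_0}^2$ and the second is $o(h^{2k})$ by uniform smoothness of $w$ on $\Omega_0$. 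Second, your non-degeneracy argument is circular as written: "the $k$-th derivatives of a nonzero analytic field cannot vanish on a set of positive measure" presupposes $D^k w\not\equiv 0$, i.e.\ that $w$ is not a polynomial of degree $<k$, which is exactly what must be shown. This is easy to supply: if $\nabla\times\mathbf{u}$ were a polynomial of degree $<k$ then $\mathbf{u}=\lambda^{-1}\varepsilon_r^{-1}\nabla\times(\mu_r^{-1}\nabla\times\mathbf{u})$ would be a polynomial (here $\lambda>0$ by the coercivity of $a(\cdot,\cdot)$ on $U_0$ via Lemma \ref{Friedrichs inequality}), and iterating the identity $\lambda\varepsilon_r\mathbf{u}=\nabla\times\mu_r^{-1}\nabla\times\mathbf{u}$, which drops the polynomial degree by two at each application, forces $\mathbf{u}=0$, a contradiction. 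With these two adjustments your argument is a complete and correct substitute for the cited Theorem 2.1; note also that the resulting implicit constant depends on $\mathbf{u}$ through $|\nabla\times\mathbf{u}|_{k,\Omega_0}$, which is harmless for the use made of the lemma but worth recording.
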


\begin{lemma}
	Let $(\lambda,\mathbf{u})$ be an eigenpair of $(\ref{weak form})$, $(\lambda_{h},\mathbf{u}_{h})$ be an eigenpair of $(\ref{WG-scheme})$. For sufficiently small $h$, $\gamma(h)\ll 1$, the following estimate holds
	$$\varepsilon_{h,\mathbf{u}}\geq \lambda_{h}\Vert \mathbf{u}-\mathbf{u}_{h} \Vert_{X}^{2}.$$
\end{lemma}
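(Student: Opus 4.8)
The plan is to compute $\varepsilon_{h,\mathbf{u}}$ explicitly, show it is bounded below by a fixed positive multiple of $h^{2k}$, and show that the right-hand side $\lambda_{h}\Vert\mathbf{u}-\mathbf{u}_{h}\Vert_{X}^{2}$ decays strictly faster than $h^{2k}$; the inequality then follows once $h$ is small enough. First I would rewrite $\varepsilon_{h,\mathbf{u}}=a(\mathbf{u},\mathbf{u})-a_{w}(\mathbf{Q}_{h}\mathbf{u},\mathbf{Q}_{h}\mathbf{u})$ exactly as at the start of the proof of Lemma \ref{eps error}. Using the commutativity $\nabla_{w}\times\mathbf{Q}_{h}\mathbf{u}=\mathbb{Q}_{h}(\nabla\times\mathbf{u})$ from Lemma \ref{projection} together with the $L^{2}$-orthogonality of the projection $\mathbb{Q}_{h}$ (and absorbing the constant $\mu_{r}^{-1}$), this yields the identity
$$
\varepsilon_{h,\mathbf{u}}=\sumT\Vert\nabla\times\mathbf{u}-\mathbb{Q}_{h}(\nabla\times\mathbf{u})\Vert_{T}^{2}-s(\mathbf{Q}_{h}\mathbf{u},\mathbf{Q}_{h}\mathbf{u}),
$$
exhibiting $\varepsilon_{h,\mathbf{u}}$ as a genuine approximation-error term minus the stabilization contribution.

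For the lower bound I would invoke Lemma \ref{projection lower bound}, which gives $\sumT\Vert\nabla\times\mathbf{u}-\mathbb{Q}_{h}(\nabla\times\mathbf{u})\Vert_{T}^{2}\gtrsim h^{2k}$ for the leading term. For the stabilization term, the same trace- and projection-inequality chain already used in Lemma \ref{eps error} shows $s(\mathbf{Q}_{h}\mathbf{u},\mathbf{Q}_{h}\mathbf{u})\lesssim\gamma(h)h^{2k}$; since $\gamma(h)\to0$ this is $o(h^{2k})$. Combining the two, for sufficiently small $h$ the stabilization term is dominated and one obtains $\varepsilon_{h,\mathbf{u}}\gtrsim h^{2k}$.

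It then remains to bound the right-hand side. From Theorem \ref{eigfunction error} we have the $L^{2}$ estimate $\Vert\mathbf{u}-\mathbf{u}_{h}\Vert\lesssim\gamma(h)^{-1}h^{k+1}$; since $\varepsilon_{r}$ is a positive constant, the norm $\Vert\cdot\Vert_{X}$ is equivalent to $\Vert\cdot\Vert$, and $\lambda_{h}$ stays bounded as it converges to $\lambda$. Hence $\lambda_{h}\Vert\mathbf{u}-\mathbf{u}_{h}\Vert_{X}^{2}\lesssim\gamma(h)^{-2}h^{2k+2}$. Dividing by $h^{2k}$ leaves the factor $\gamma(h)^{-2}h^{2}$, which for $\gamma(h)=h^{\varepsilon}$ ($0<\varepsilon<1$) equals $h^{2-2\varepsilon}\to0$ and for $\gamma(h)=-1/\log(h)$ equals $(\log h)^{2}h^{2}\to0$. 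Thus the right-hand side is $o(h^{2k})$ while $\varepsilon_{h,\mathbf{u}}\gtrsim h^{2k}$, so $\varepsilon_{h,\mathbf{u}}\geq\lambda_{h}\Vert\mathbf{u}-\mathbf{u}_{h}\Vert_{X}^{2}$ holds for all sufficiently small $h$.

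The step I expect to be the crux is the lower bound on the leading consistency term, i.e. Lemma \ref{projection lower bound}: the whole argument collapses to the trivial upper bound $\vert\varepsilon_{h,\mathbf{u}}\vert\lesssim h^{2k}$ of Lemma \ref{eps error} unless one knows the projection error does \emph{not} superconverge below $h^{2k}$. This saturation (non-superconvergence) of $\mathbf{u}$ on the given mesh family is therefore essential, and it tacitly requires that $\mathbf{u}$ be no smoother than $H^{k+1}$ so that the rate is sharp. The remaining ingredient is the one-order gap between the $V$- and $L^{2}$-rates combined with $\gamma(h)\to0$, which is precisely what forces $\gamma(h)^{-2}h^{2}\to0$ and makes the right-hand side negligible.
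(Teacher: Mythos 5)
Your proposal is correct and follows essentially the same route as the paper's own proof: the Pythagorean identity via $\nabla_w\times\mathbf{Q}_h\mathbf{u}=\mathbb{Q}_h(\nabla\times\mathbf{u})$, the saturation lower bound of Lemma \ref{projection lower bound} against the $\gamma(h)h^{2k}$ stabilization term, and the $L^2$ eigenfunction estimate $\gamma(h)^{-2}h^{2(k+1)}$ from Theorem \ref{eigfunction error}. Your explicit verification that $\gamma(h)^{-2}h^{2}\to 0$ for both admissible choices of $\gamma(h)$, and your remark that the argument hinges on non-superconvergence of the projection error, make the paper's implicit steps precise.
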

\begin{proof}
	It follows from Lemma $\ref{projection}$ that
	\begin{flalign*}
		&\varepsilon_{h,\mathbf{u}}\\
		=&a(\mathbf{u},\mathbf{u})-a_{w}(\mathbf{Q}_{h}\mathbf{u},\mathbf{Q}_{h}\mathbf{u})      \\
		=&\sum\limits_{T\in \mathcal{T}_{h}}\Vert \nabla\times  \mathbf{u} \Vert_{T}^{2}-\sum\limits_{T\in \mathcal{T}_{h}}\Vert \nabla_{w}\times \mathbf{Q}_{h}\mathbf{u} \Vert_{T}^{2}-s(Q_{h}u,Q_{h}u)        \\
		=&\sum\limits_{T\in \mathcal{T}_{h}}\Vert \nabla\times \mathbf{u}-\mathbb{Q}_{h}\nabla \times \mathbf{u} \Vert_{T}^{2}-\sum\limits_{T\in \mathcal{T}_{h}}\gamma(h)h_{T}^{-1}\Vert \mathbf{Q}_{0}\mathbf{u}-\mathbf{Q}_{b}\mathbf{u} \Vert_{\partial T}^{2}.
	\end{flalign*}
	Combining Lemma $\ref{projection lower bound}$ with the trace inequality \cite{MR3223326}, there holds
	\begin{align*}
		&\sum\limits_{T\in \mathcal{T}_{h}}\Vert \nabla \times \mathbf{u}-\mathbb{Q}_{h}\nabla \times \mathbf{u} \Vert_{T}^{2}\gtrsim h^{2k},\\
		&\sum\limits_{T\in \mathcal{T}_{h}}\gamma(h)h_{T}^{-1}\Vert \mathbf{Q}_{0}\mathbf{u}-\mathbf{Q}_{b}\mathbf{u} \Vert_{\partial T}^{2}
		\lesssim \gamma(h)h^{2k}.
	\end{align*}
	Notice that $h$ is sufficiently small, $\gamma(h)\ll 1$, then
	$$\varepsilon_{h,\mathbf{u}}\gtrsim h^{2k}.$$
	It follows from Theorem $\ref{eigfunction error}$ that
	$$\lambda_{h}\Vert \mathbf{u}-\mathbf{u}_{h} \Vert^{2}\lesssim \gamma(h)^{-2} h^{2(k+1)}.$$
	Therefore, for sufficiently small $h$, the following inequality holds
	$$\varepsilon_{h,\mathbf{u}}\geq \lambda_{h}\Vert \mathbf{u}-\mathbf{u}_{h} \Vert^{2}.$$
	Then we complete the proof.
\end{proof}

From \cite[Theorem 2.4]{MR3919912}, we have the following theorem.
\begin{theorem}
	\label{lower bound}
	Let $(\lambda,\mathbf{u})$ be an eigenpair of $(\ref{weak form})$, $(\lambda_{h},\mathbf{u}_{h})$ be an eigenpair of $(\ref{WG-scheme})$, for sufficiently small $h$, we have
	$$\lambda\geq \lambda_{h}.$$
\end{theorem}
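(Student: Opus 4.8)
The plan is to treat this theorem as the capstone of the reduction carried out in Sections 4 and 5: once the reduced elliptic eigenvalue problem \eqref{weak form} and its WG discretization \eqref{WG-scheme} have been shown to satisfy the abstract hypotheses (A1)--(A7) of the framework in \cite{MR3919912}, the asymptotic lower bound is exactly the conclusion of \cite[Theorem 2.4]{MR3919912}. Thus the proof consists in collecting the verifications already established --- coercivity and symmetry (A1), compactness of $K$ and $K_h$ (A2), the existence of the $b$-preserving projection $\mathbf{Q}_h$ (A3), the convergence of the approximability quantities $e_{h,\mu},e_{h,\mu}',\delta_{h,\mu},\delta_{h,\mu}'$ (A4)--(A5), $\varepsilon_{h,\mathbf{u}}\to0$ (A6), and the sign condition $\varepsilon_{h,\mathbf{u}}\geq\lambda_h\|\mathbf{u}-\mathbf{u}_h\|^2$ (A7) proved in the preceding lemma --- and invoking the abstract theorem.

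To make the mechanism transparent, I would record the Rayleigh-quotient computation that underlies it. Normalizing $b(\mathbf{u},\mathbf{u})=1$ and taking $\mathbf{Q}_h\mathbf{u}\in V_h^0$ (admissible by (A3)) as a trial function, the discrete min-max principle gives $\lambda_h\leq a_w(\mathbf{Q}_h\mathbf{u},\mathbf{Q}_h\mathbf{u})/b_w(\mathbf{Q}_h\mathbf{u},\mathbf{Q}_h\mathbf{u})$; for an eigenvalue of multiplicity $m$ this is applied to the whole projected space $\mathbf{Q}_h R(E_\mu(K))$, which remains $m$-dimensional for small $h$ by the approximability estimates. Writing $a_w(\mathbf{Q}_h\mathbf{u},\mathbf{Q}_h\mathbf{u})=a(\mathbf{u},\mathbf{u})-\varepsilon_{h,\mathbf{u}}=\lambda-\varepsilon_{h,\mathbf{u}}$ and using the $L^2$-orthogonality identity $b(\mathbf{u},\mathbf{u})-b_w(\mathbf{Q}_h\mathbf{u},\mathbf{Q}_h\mathbf{u})=\varepsilon_r\|\mathbf{u}-\mathbf{Q}_0\mathbf{u}\|^2$, the desired inequality $\lambda\geq\lambda_h$ reduces to $\varepsilon_{h,\mathbf{u}}\geq\lambda\,\varepsilon_r\|\mathbf{u}-\mathbf{Q}_0\mathbf{u}\|^2$, which is precisely the sign condition (A7) after inserting the matching $\lambda_h\to\lambda$ and $\|\mathbf{u}-\mathbf{u}_h\|\approx\|\mathbf{u}-\mathbf{Q}_0\mathbf{u}\|$ supplied by Theorems \ref{eigfunction error} and \ref{eigenvalue error}.

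The genuine difficulty --- and what I expect to be the crux --- is the asymptotic balance of competing rates that makes (A7) hold. The positive consistency term obeys $\varepsilon_{h,\mathbf{u}}\gtrsim h^{2k}$ by Lemma \ref{projection lower bound} together with the upper bound of Lemma \ref{eps error}, whereas $\lambda_h\|\mathbf{u}-\mathbf{u}_h\|^2\lesssim\gamma(h)^{-2}h^{2(k+1)}$ by Theorem \ref{eigfunction error}. Hence $\varepsilon_{h,\mathbf{u}}\geq\lambda_h\|\mathbf{u}-\mathbf{u}_h\|^2$ holds as soon as $h^{2}\gamma(h)^{-2}\to0$, which is guaranteed for both admissible choices $\gamma(h)=h^{\varepsilon}$ with $0<\varepsilon<1$ (yielding $h^{2-2\varepsilon}\to0$) and $\gamma(h)=-1/\log(h)$ (yielding $h^{2}\log(h)^2\to0$). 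This is exactly why the stabilization weight must satisfy $\gamma(h)\ll1$ yet decay no faster than permitted: it is the delicate tuning of $\gamma(h)$, rather than any single estimate, that converts the generic upper-bound behaviour of Galerkin eigenvalues into the asymptotic lower bound, so that for all sufficiently small $h$ we obtain $\lambda\geq\lambda_h$.
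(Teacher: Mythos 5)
Your proposal matches the paper's proof: the paper proves Theorem \ref{lower bound} simply by invoking \cite[Theorem 2.4]{MR3919912} after the verifications of (A1)--(A7) in Sections 4 and 5, with the decisive step being the preceding lemma establishing (A7) via the rate comparison $\varepsilon_{h,\mathbf{u}}\gtrsim h^{2k}$ versus $\lambda_h\|\mathbf{u}-\mathbf{u}_h\|^2\lesssim\gamma(h)^{-2}h^{2(k+1)}$, exactly as you describe. Your additional Rayleigh-quotient exposition is a correct account of the mechanism inside the cited abstract theorem, but it is not needed beyond what the paper does.
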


Combining Theorems \ref{eigenvalue error} and \ref{lower bound}, the asymptotic lower bound approximations of the exact eigenvalues are obtained.

\section{Numerical Experiments}
In this section, we present some numerical experiments.
\subsection{Square domain}
In first example, we consider the Maxwell eigenvalue problem $(\ref{weak form})$ on the square domain $\Omega=(0,\pi)^2$. Let $\gamma(h)=h^{0.1}$, $\varepsilon_r=1$ and $\mu_r=1$. The degree $k$ of the Wg finite element space is selected as 1. The first five exact eigenvalues is: $\lambda_{1}=1$, $\lambda_{2}=1$, $\lambda_{3}=2$, $\lambda_{4}=4$ and $\lambda_{5}=4$.

The numerical results are presented in Table \ref{table 1}. We can find that the first five eigenvalues all reach the optimal convergence orders and are the asymptotic lower bounds of the exact eigenvalues. The images and vectorgraph of the third eigenfunction are shown in Figures \ref{fig1}-\ref{fig3}.

\begin{table}[htbp]
	\label{table 1}
	\centering
	\caption{$\Omega=(0,\pi)^{2}$, $k=1$, $\gamma(h)=h^{0.1}$.}
	\renewcommand\arraystretch{1}
	\begin{tabular}{|c | c |c | c |c |}
		\hline
		$h$ & 1/8 & 1/16 & 1/32 & 1/64   \\ [0ex]
		\hline\hline
		$\lambda_{1}-\lambda_{1,h}$ & 1.3857e-01  &   4.0828e-02   &  1.1269e-02  &   3.0525e-03 \\
		\hline
		order &   & 1.7629 &   1.8572   & 1.8842 \\
		\hline
		$\lambda_{2}-\lambda_{2,h}$ & 6.4984e-02  &   1.8187e-02  &   4.9488e-03  &   1.3368e-03 \\
		\hline
		order &  & 1.8371 &    1.8777 &    1.8882\\
		\hline
		$\lambda_{3}-\lambda_{3,h}$ &  4.1608e-01    & 1.2117e-01   &  3.3223e-02    & 8.9721e-03\\
		\hline
		order &   & 1.7797  &   1.8668  &  1.8886 \\
		\hline
		$\lambda_{4}-\lambda_{4,h}$ & 1.3155e+00  &   4.4141e-01   &  1.2730e-01  &   3.4933e-02
		\\
		\hline
		order &   & 1.5754 &    1.7938 &    1.8655  \\
		\hline
		$\lambda_{5}-\lambda_{5,h}$ & 1.2921e+00    &  4.3273e-01   &  1.2657e-01    & 3.4878e-02
		\\
		\hline
		order &   &1.5782 &    1.7734 &   1.8595 \\
		\hline
	\end{tabular}
\end{table}

\begin{figure}[htbp]
	\centering
	\begin{minipage}[t]{0.48\textwidth}
		\centering
		\includegraphics[width=6cm]{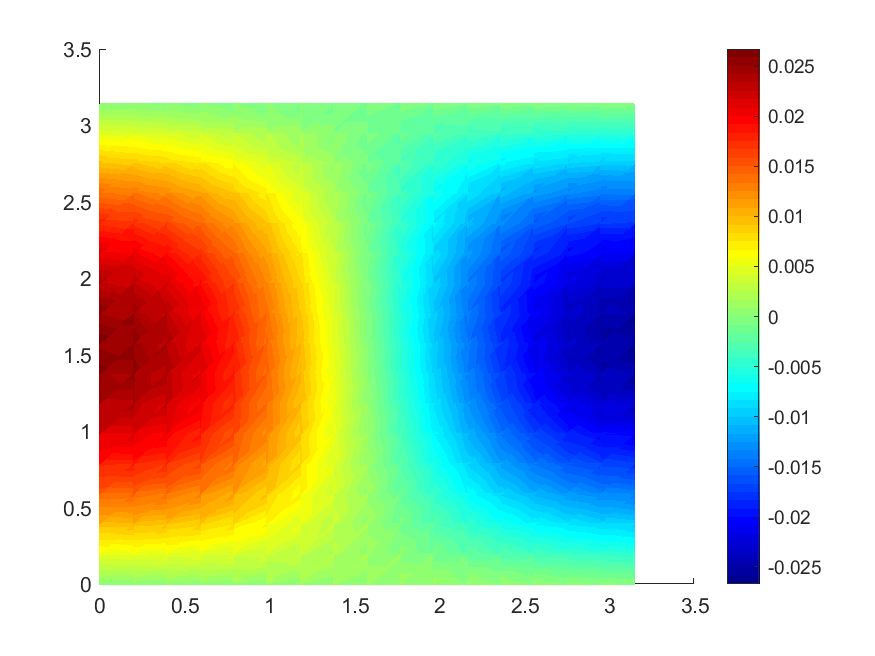}
		\caption{the first component of the third eigenfunction.}
			\label{fig1}
	\end{minipage}
	\begin{minipage}[t]{0.48\textwidth}
		\centering
		\includegraphics[width=6cm]{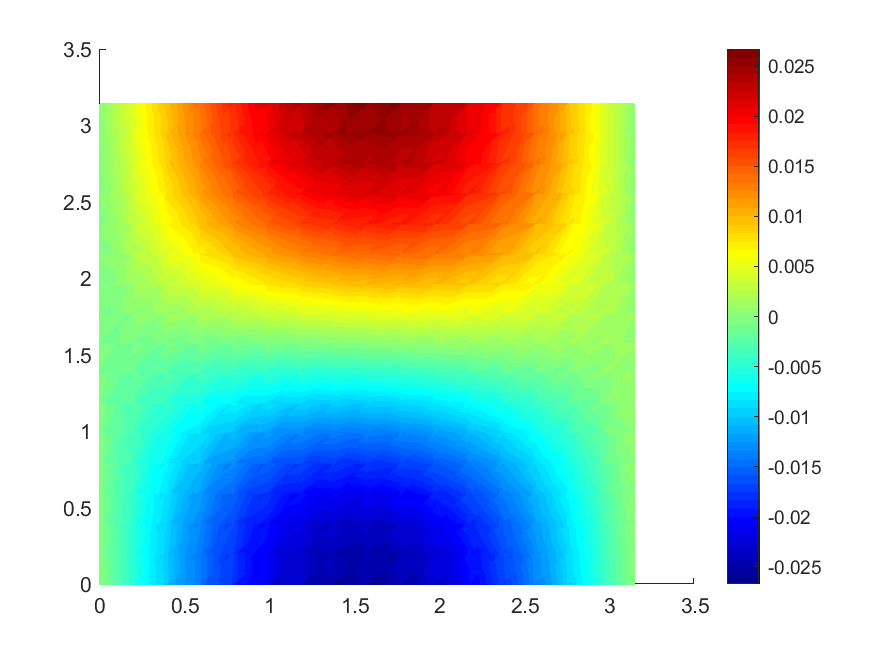}
		\caption{the second component of the third eigenfunction.}
			\label{fig2}
	\end{minipage}
	\begin{minipage}[t]{0.48\textwidth}
		\centering
		\includegraphics[width=6cm]{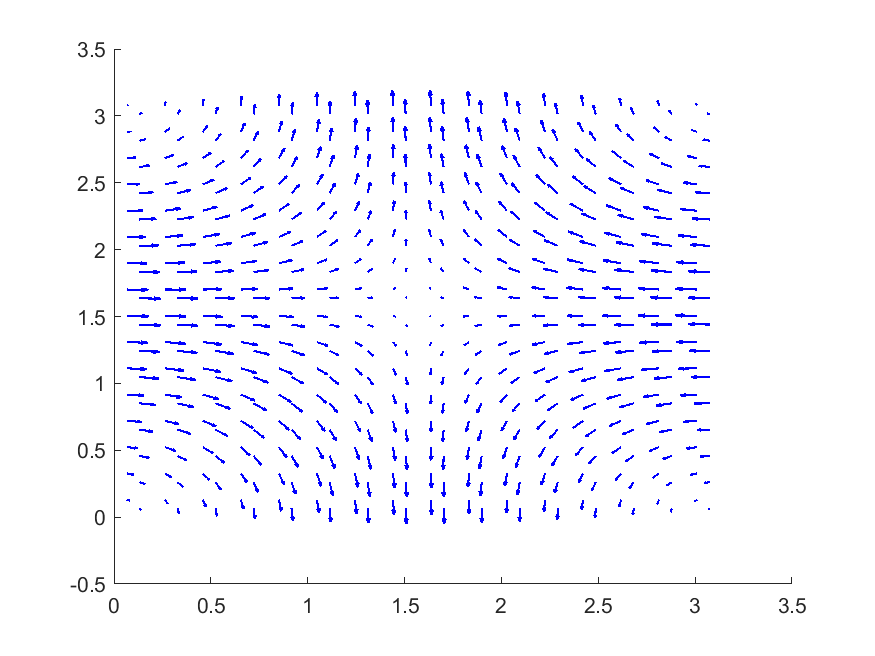}
		\caption{the vectorgraph of the third eigenfunction.}
		\label{fig3}
	\end{minipage}
\end{figure}


\bibliographystyle{siam}
\bibliography{library}

\begin{thebibliography}{10}

\bibitem{MR1115240}
{\sc I.~Babu\v{s}ka and J.~Osborn}, {\em Eigenvalue problems}, in Handbook of
  numerical analysis, {V}ol. {II}, Handb. Numer. Anal., II, North-Holland,
  Amsterdam, 1991, pp.~641--787.

\bibitem{MR2652780}
{\sc D.~Boffi}, {\em Finite element approximation of eigenvalue problems}, Acta
  Numer., 19 (2010), pp.~1--120.

\bibitem{MR3097958}
{\sc D.~Boffi, F.~Brezzi, and M.~Fortin}, {\em Mixed finite element methods and
  applications}, vol.~44 of Springer Series in Computational Mathematics,
  Springer, Heidelberg, 2013.

\bibitem{MR3916956}
{\sc D.~Boffi and L.~Gastaldi}, {\em Adaptive finite element method for the
  {M}axwell eigenvalue problem}, SIAM J. Numer. Anal., 57 (2019), pp.~478--494.

\bibitem{MR3712172}
{\sc D.~Boffi, L.~Gastaldi, R.~Rodr\'{\i}guez, and I.~\v{S}ebestov\'{a}}, {\em
  Residual-based {\it a posteriori} error estimation for the {M}axwell's
  eigenvalue problem}, IMA J. Numer. Anal., 37 (2017), pp.~1710--1732.

\bibitem{MR3918688}
\leavevmode\vrule height 2pt depth -1.6pt width 23pt, {\em A posteriori error
  estimates for {M}axwell's eigenvalue problem}, J. Sci. Comput., 78 (2019),
  pp.~1250--1271.

\bibitem{MR4568428}
{\sc D.~Boffi, J.~Guzm\'{a}n, and M.~Neilan}, {\em Convergence of {L}agrange
  finite elements for the {M}axwell eigenvalue problem in two dimensions}, IMA
  J. Numer. Anal., 43 (2023), pp.~663--691.

\bibitem{cendes1986development}
{\sc Z.~Cendes, D.~Hudak, J.~Lee, and D.~Sun}, {\em Development of new methods
  for predicting the bistatic electromagnetic scattering from absorbing
  shapes}, RADC-TR R\&D,  (1986).

\bibitem{MR912525}
{\sc F.~Kikuchi}, {\em Mixed and penalty formulations for finite element
  analysis of an eigenvalue problem in electromagnetism}, in Proceedings of the
  first world congress on computational mechanics ({A}ustin, {T}ex., 1986),
  vol.~64, 1987, pp.~509--521.

\bibitem{kobelansky1986eliminating}
{\sc A.~Kobelansky and J.~Webb}, {\em Eliminating spurious modes in
  finite-element waveguide problems by using divergence-free fields},
  Electronics Letters, 11 (1986), pp.~569--570.

\bibitem{1573841}
{\sc J.-H. Lee, T.~Xiao, and Q.~Liu}, {\em A 3-d spectral-element method using
  mixed-order curl conforming vector basis functions for electromagnetic
  fields}, IEEE Transactions on Microwave Theory and Techniques, 54 (2006),
  pp.~437--444.

\bibitem{MR4566815}
{\sc Q.~Liang and X.~Xu}, {\em A two-level preconditioned {H}elmholtz subspace
  iterative method for {M}axwell eigenvalue problems}, SIAM J. Numer. Anal., 61
  (2023), pp.~642--674.

\bibitem{MR3120579}
{\sc Q.~Lin, H.~Xie, and J.~Xu}, {\em Lower bounds of the discretization error
  for piecewise polynomials}, Math. Comp., 83 (2014), pp.~1--13.

\bibitem{MR3371510}
{\sc N.~Liu, L.~Tob\'{o}n, Y.~Tang, and Q.~H. Liu}, {\em Mixed spectral element
  method for 2{D} {M}axwell's eigenvalue problem}, Commun. Comput. Phys., 17
  (2015), pp.~458--486.

\bibitem{PhysRevE.79.026705}
{\sc M.~Luo, Q.~H. Liu, and Z.~Li}, {\em Spectral element method for band
  structures of two-dimensional anisotropic photonic crystals}, Phys. Rev. E,
  79 (2009), p.~026705.

\bibitem{MR2059447}
{\sc P.~Monk}, {\em Finite element methods for {M}axwell's equations},
  Numerical Mathematics and Scientific Computation, Oxford University Press,
  New York, 2003.

\bibitem{MR3325251}
{\sc L.~Mu, J.~Wang, and X.~Ye}, {\em A weak {G}alerkin finite element method
  with polynomial reduction}, J. Comput. Appl. Math., 285 (2015), pp.~45--58.

\bibitem{MR3286455}
\leavevmode\vrule height 2pt depth -1.6pt width 23pt, {\em Weak {G}alerkin
  finite element methods on polytopal meshes}, Int. J. Numer. Anal. Model., 12
  (2015), pp.~31--53.

\bibitem{MR3394450}
{\sc L.~Mu, J.~Wang, X.~Ye, and S.~Zhang}, {\em A weak {G}alerkin finite
  element method for the {M}axwell equations}, J. Sci. Comput., 65 (2015),
  pp.~363--386.

\bibitem{MR592160}
{\sc J.-C. N\'{e}d\'{e}lec}, {\em Mixed finite elements in {${\bf R}^{3}$}},
  Numer. Math., 35 (1980), pp.~315--341.

\bibitem{MR864305}
\leavevmode\vrule height 2pt depth -1.6pt width 23pt, {\em A new family of
  mixed finite elements in {${\bf R}^3$}}, Numer. Math., 50 (1986), pp.~57--81.

\bibitem{rahman1984penalty}
{\sc B.~A. Rahman and J.~B. Davies}, {\em Penalty function improvement of
  waveguide solution by finite elements}, IEEE Transactions on Microwave Theory
  and Techniques, 32 (1984), pp.~922--928.

\bibitem{1969Finite}
{\sc P.~P. Silvester}, {\em Finite-element solution of homogeneous waveguide
  problems},  (1969).

\bibitem{MR3223326}
{\sc J.~Wang and X.~Ye}, {\em A weak {G}alerkin mixed finite element method for
  second order elliptic problems}, Math. Comp., 83 (2014), pp.~2101--2126.

\bibitem{MR3452926}
\leavevmode\vrule height 2pt depth -1.6pt width 23pt, {\em A weak {G}alerkin
  finite element method for the stokes equations}, Adv. Comput. Math., 42
  (2016), pp.~155--174.

\bibitem{winkler1984elimination}
{\sc J.~R. Winkler and J.~B. Davies}, {\em Elimination of spurious modes in
  finite element analysis}, Journal of Computational Physics, 56 (1984),
  pp.~1--14.

\bibitem{MR3919912}
{\sc Q.~Zhai, H.~Xie, R.~Zhang, and Z.~Zhang}, {\em The weak {G}alerkin method
  for elliptic eigenvalue problems}, Commun. Comput. Phys., 26 (2019),
  pp.~160--191.

\end{thebibliography}

\end{document}